\documentclass{TPmod}

\usepackage{tikz-cd}
\usepackage{enumitem}
\usepackage{mathtools}
\usepackage{verbatim}
\usepackage{subcaption}
\usepackage{todonotes}
\usepackage[citestyle=alphabetic,bibstyle=alphabetic,backend=bibtex, maxbibnames=10,minbibnames=5]{biblatex}
\setlength\mathsurround{0pt} 

\newtheorem{ass}{Assumption}[section]

\numberwithin{equation}{section}

\newtheorem{conjmain}[mainthm]{Conjecture}

\newtheorem*{question}{Question}

\crefname{thm}{Theorem}{Theorems}
\crefname{prop}{Proposition}{Propositions}

\def\hookar{\ar@{^{(}->}}

\def\eps{\epsilon}

\def\mir0{F_0}

\newcommand{\NN}{\mathbb{N}}

\newcommand{\harpoon}{\vec}

\def\and{\, \& \,}

\renewcommand{\vec}[1]{\mathbf{#1}}

\newcommand{\ZZ}{\mathbb Z}

\def\comb{\mathsf{comb}}
\def\V{\mathbf{V}}
\addbibresource{References.bib}

\title{On the positivity and integrality of coefficients of mirror maps}

\author{Sophie Bleau and Nick Sheridan}

\begin{abstract} 
We present natural conjectural generalizations of the `positivity and integrality of mirror maps' phenomenon, encompassing the mirror maps appearing in the Batyrev--Borisov construction of mirror Calabi--Yau complete intersections in Fano toric varieties as a special case. 
We find that, given the combinatorial data from which one constructs a mirror pair of Calabi--Yau complete intersections, there are \emph{two} ways of writing down an associated `mirror map': one which is the `true mirror map', meaning the one which appears in mirror symmetry theorems; and one which is the `naive mirror map'. 
The two are equal under a certain combinatorial criterion which holds e.g. for the quintic threefold, but not in general. 
We conjecture (based on substantial computer checks, together with proofs under extra hypotheses) that the naive mirror map always has positive integer coefficients, while the true mirror map always has integer (but not necessarily positive) coefficients. 
Most previous works on the integrality of mirror maps concern the naive mirror map, and in particular, only apply to the true mirror map under the combinatorial criterion mentioned above.
\end{abstract}

\begin{document}
\pagenumbering{roman}
\maketitle

\pagenumbering{arabic}

\section{Introduction}

\subsection{Mirror symmetry context}

In this subsection, we explain the mirror symmetry context for our work. 
The reader unfamiliar with mirror symmetry, Gromov--Witten invariants, or Yukawa couplings is reassured that they will not be mentioned outside of this subsection, and referred to the excellent \cite{cox1999mirror} if they would like to learn. 

Genus-zero enumerative mirror symmetry for the quintic threefold is a relationship between, on the one hand, the generating function 
for genus-zero Gromov--Witten invariants of the quintic threefold; and on the other, the Yukawa coupling for the mirror quintic family. 
Explicitly, let $X \subset \mathbb{CP}^4$ be a smooth quintic hypersurface, and $N_d$ the genus-zero, $3$-point Gromov--Witten invariant of degree-$d$ curves in $X$, with the hyperplane class $H \in H^2(X)$ inserted at all three points, $\mathrm{GW}_{0,3}^{X,d}(H,H,H)$. 
Then the relevant generating function for Gromov--Witten invariants is 
$$ f(Q) = \sum_{d \ge 0} N_d \cdot Q^d.
$$
On the other side, let $Y_q$ be a crepant resolution of the quotient of the hypersurface
$$ \left\{ y_1y_2y_3y_4y_5 = q \cdot \left(y_1^5+y_2^5+y_3^5+y_4^5+y_5^5\right) \right\} \subset \mathbb{CP}^4$$
by the group $\Gamma = \ker((\Z/5\Z)^3 \xrightarrow{\sum} \Z/5\Z)/(\Z/5\Z)$ acting diagonally on $\mathbb{CP}^4$ by fifth roots of unity. 
The Yukawa coupling for this family is
$$ g(q) = \int_{Y_q} \Omega_q \wedge \nabla_{q \partial_q} \nabla_{q \partial_q} \nabla_{q \partial_q} \Omega_q$$
where $\Omega_q \in \Omega^{3,0}(Y_q)$ is the family of `normalized' holomorphic volume forms, and $\nabla$ is the Gauss--Manin connection. 

Genus-zero enumerative mirror symmetry for the quintic (first conjectured by Candelas--de la Ossa--Green--Parkes \cite{candelas1991pair} and proved by Givental \cite{Givental_quintic} and Lian--Liu--Yau \cite{LLY}) then says
$$g(q) = f(Q(q))$$
where $Q(q)$ is an explicit power series whose form we give in \eqref{eq:quint_mm} below. 
One derives explicit formulae for the Yukawa coupling $g(q)$ and the mirror map $Q(q)$ by solving the Picard--Fuchs equation associated to the family $Y_q$, which can be done as the latter is of hypergeometric type (see \cite[Section 6.3.4]{cox1999mirror} for an exposition). 
The most striking consequence of this version of mirror symmetry is that we can solve for the generating function $f(Q)$ for the Gromov--Witten invariants $N_d$, thus giving explicit formulae for the latter. 

The mirror symmetry conjecture was generalized to Calabi--Yau complete intersections in Fano toric varieties by Batyrev--Borisov \cite{Batyrev_Borisov}, and genus-zero enumerative mirror symmetry was proved in this context by Givental \cite{Givental_toric}. 
In general, the Gromov--Witten generating function $f(\vec{Q})$ associated to a complete intersection $X$, and the Yukawa coupling $g(\vec{q})$ associated to the mirror family of complete intersections $Y_{\vec{q}}$, depend on multiple parameters $\vec{Q} = (Q_i)_{i=1}^N$, respectively $\vec{q} = (q_i)_{i=1}^N$. 
The number of parameters $Q_i$ is equal to the rank of $H_2(X)$, while the number of parameters $q_i$ is the dimension of the moduli space of complex deformations of $Y_{\vec{q}}$. 
It is a non-trivial feature of Batyrev--Borisov's construction that these numbers of variables coincide; e.g., in the case of the quintic they are both $1$.  
There are now $N$ mirror maps, each with $N$ variables: $\vec{Q}(\vec{q}) = (Q_i(q_1,\ldots,q_N))_{i=1}^N$.

The main players in the present work are the mirror maps $\vec{Q}(\vec{q})$.  
In the case of the quintic the mirror map is given by 
\begin{align}
\label{eq:quint_mm}    Q_{\text{quintic}}(q) &= q^5 \cdot \exp\left(5\phi_1\left(q^5\right)/\phi_0\left(q^5\right)\right) \quad \text{where}\\
\label{eq:quint_phi0}    \phi_0(z) &= \sum_{k \ge 0} \frac{(5k)!}{(k!)^5} \cdot z^k,\\
\label{eq:quint_phi1}    \phi_1(z) &= \sum_{k \ge 1} \frac{(5k)!}{(k!)^5} \cdot \sum_{j=k+1}^{5k} \frac{1}{j} \cdot z^k.
\end{align}

It was observed in the early days of mirror symmetry that the coefficients of $Q_{\text{quintic}}(q)$ seemed all to be positive integers (the earliest references for integrality we can find are \cite{batyrev1995generalized,Lian1996}; the first mention we can find of positivity is in \cite{krattenthaler2011analytic}, although it had surely been remarked before then). 
In fact, the fifth root of $Q_{\text{quintic}}$ is even integral:
$$Q_{\text{quintic}}^{1/5} = z \cdot \exp\left(\frac{\phi_1(z)}{\phi_0(z)}\right) = z+154z^2 + 155423z^3 + 237738254z^4 + 439875902939z^5 +  \ldots, $$
and its logarithm even has positive coefficients:
$$\frac{\phi_1(z)}{\phi_0(z)} = 154z + 143565z^2 + \frac{645061600}{3} z^3 + \frac{789462914125}{2} z^4 +\ldots.$$
The integrality of the coefficients of $Q_{\text{quintic}}$ was first proved by Lian--Yau \cite{Lian1998}, who also proved integrality of the coefficients of $Q_{\text{quintic}}^{1/5}$ \cite{Lian-Yau-nthroot} (a different approach was later developed by Kontsevich--Schwarz--Vologodsky \cite{KSV}; the integrality question has also been considered from a physics perspective, e.g. in \cite{JockersMayr}). 
The positivity of the coefficients of $\phi_1/\phi_0$ was first proved by Krattenthaler--Rivoal \cite{krattenthaler2011analytic}, who also address the question of its convergence.

\subsection{The conjectures}

We introduce notation:
\begin{align*}
    H(n) &\coloneqq \sum_{i=1}^n \frac{1}{i} \qquad \text{if $n \ge 0$ (we define $H(0) = 0$);}\\
    \comb(k_1,\ldots,k_m) &\coloneqq \frac{\left(\sum_{j=1}^m k_j\right)!}{\prod_{j=1}^m k_j!} \qquad \text{if all $k_j \ge 0$.}
\end{align*}

Now we introduce the data from which our power series are constructed. 
Let $(\vec{v}_{ij})$ be vectors in $\Z^d$ indexed by the set
\begin{equation}
    \label{eq:I}
    I = \{(i,j): 1 \le i \le p; 1 \le j \le q_i\}.
\end{equation}
When $p=1$, we will simply write $\vec{v}_j$ instead of $\vec{v}_{1j}$.

Define the linear map 
\begin{align*}
   \V: \Z^I & \to \Z^d \quad \text{sending}\\
    \vec{e}_{ij} & \mapsto \vec{v}_{ij}
\end{align*}
where $\vec{e}_{ij}$ is the basis vector of $\Z^I$ corresponding to $(i,j) \in I$. Let $K \subset \Z^I$ be its kernel. 
Define the monoid
\begin{align*}
    K_0 & = \{\vec{k} \in K: k_{ij} \ge 0 \text{ for all $(i,j) \in I$}\}   
\end{align*}
and the corresponding completed monoid ring
\begin{align*}
    \Q[[\vec{z}]]_{K_0} := \left\{ \sum_{\vec{k} \in K_0} c_{\vec{k}} \cdot \vec{z}^{\vec{k}}: c_{\vec{k}} \in \Q\right\}.
\end{align*}
It is the completion of the monoid ring $\Q[K_0]$ at the ideal consisting of sums with $c_{\vec{0}} = 0$. 

Define elements
$$\phi_0 = \sum_{\vec{k} \in K_0} \left(\prod_{i=1}^p \comb(k_{i1},\dots,k_{iq_i})\right) \cdot \vec{z}^{\vec{k}} \in \Z[[\vec{z}]]_{K_0}$$
and
$$\phi_{ij} = \sum_{\vec{k} \in K_0} \left(\prod_{r=1}^p \comb(k_{r1},\dots,k_{rq_r})\right) \cdot \left(H\left(\sum_{t=1}^{q_i} k_{it}\right) - H(k_{ij}) \right) \cdot \vec{z}^{\vec{k}} \in \Q[[\vec{z}]]_{K_0}.$$
We define the \emph{naive mirror map} to have components $z_{ij} \cdot \psi_{ij}^{\text{n}}$, where
$$\psi_{ij}^{\text{n}}(\vec{z}) = \exp(\phi_{ij}/\phi_0).$$

\begin{rmk}
    It may be helpful to note that $\phi_0$ consists precisely of the summands of the sum
    $$\prod_{i=1}^p \frac{1}{1-\sum_{j=1}^{q_i} z_{ij}} = \sum_{\vec{k} \in \ZZ_{\ge 0}^I} \left(\prod_{i=1}^p \comb(k_{i1},\dots,k_{iq_i})\right) \cdot \vec{z}^{\vec{k}}$$
    such that $\vec{k} \in K$.
\end{rmk}

We now introduce the hypothesis under which we conjecture that the naive mirror map has positive integer coefficients. 
Let $\Delta_i$ be the convex hull of the vectors $\vec{v}_{ij}$, together with the origin $\vec{0}$, for $i=1,\ldots,p$. 
Let $\Delta = \sum_{i=1}^p \Delta_i$ denote the Minkowski sum of the $\Delta_i$. 

\begin{ass}\label{ass}
We will always assume that the origin $\vec{0}$ lies in the interior of $\Delta$ (this is equivalent to assuming that $K \cap \NN_{>0}^I \neq \emptyset$), and that the vectors $\vec{v}_{ij}$ span $\Z^d$.   
\end{ass}

\begin{rmk}
Assumption \ref{ass} does not cause any loss of generality: if it is not satisfied, we can modify our data by restricting to the linear subspace supporting the face of $\Delta$ containing the origin in its interior, removing all vectors $\vec{v}_{ij}$ which are not contained in this subspace, and replacing $\Z^d$ with the lattice spanned by the remaining $\vec{v}_{ij}$; we will then arrive at an equivalent formula for the naive mirror map. 
\end{rmk}

We say that $\Delta$ is \emph{Fano} if the origin is the \emph{unique} interior lattice point of $\Delta$ (following \cite[Definition 2.1]{Kasprzyk_Fano}); and we call the data $(\vec{v}_{ij})$ Fano in this case.

\begin{rmk}
    We say that the data $(\vec{v}_{ij})$ \emph{arise from a nef partition} if the vectors $\vec{v}_{ij}$ are the vertices of a reflexive polytope (without repetition); and the polytope $\Delta$ is reflexive \cite{Batyrev_Borisov}. Note that reflexive polytopes are necessarily Fano, but not vice-versa.
\end{rmk}

\begin{example}
    For an example of Fano data arising from a nef partition with $p>1$, one could consider $\vec{v}_{11} = (-1,0)$, $\vec{v}_{12} = (0,-1)$, $\vec{v}_{21} = (1,1)$ in $\Z^2$. 
    Then $\Delta$ is the convex hull of the vectors $\{(\pm 1,0),(0,\pm 1),(1,1)\}$, which is evidently reflexive and in particular Fano.
\end{example}

\begin{example}
    For an example of Fano data with $p>1$ such that $\Delta$ is not reflexive, we may take $\vec{v}_{11} = (-1,0,0)$, $\vec{v}_{12} = (0,-1,0)$, $\vec{v}_{13} = (0,0,-1)$, $\vec{v}_{21} = (1,1,2)$. 
    In particular, the polytope $\Delta$ is the convex hull of the vectors $\{(-1,0,0),(0,-1,0),(0,0,-1),(0,1,2),(1,0,2),(1,1,1),(1,1,2)\}$. To show that this polytope is Fano, we first observe that the origin is clearly an interior lattice point; we must show that it has no further interior lattice points. The projection of $\Delta$ to the $x$-$y$ plane is the polygon with vertices $\{(\pm 1,0),(0,\pm 1),(1,1)\}$, whose only interior lattice point is evidently the origin. Therefore, any interior lattice point of $\Delta$ must have the form $(0,0,z)$. As $\Delta$ lies within the region $\{-1 \le z \le 2\}$, the only possibility other than the origin is $z=1$. However this point lies on the boundary face of $\Delta$ with vertices $\{(-1,0,0),(0,-1,0),(1,0,2),(0,1,2)\}$; thus the origin is the only interior lattice point, and $\Delta$ is Fano. However, $\Delta$ is not reflexive, as the face with vertices $\{(1,0,2),(0,1,2),(1,1,2)\}$ corresponds to a non-integral vertex, $(0,0,-1/2)$, in the dual polytope.
\end{example}

\begin{conjmain}\label{conj:1}
If $(\vec{v}_{ij})$ are Fano, then for all $(i,j) \in I$:
\begin{enumerate}
    \item \label{it:1a} $\psi_{ij}^{\text{n}}$ has integer coefficients; i.e., it lies in $\Z[[\vec{z}]]_{K_0}$.
    \item \label{it:1b} $\log \psi^{\text{n}}_{ij} = \phi_{ij}/\phi_0$ has non-negative coefficients; i.e., it lies in $\Q_{\ge 0}[[\vec{z}]]_{K_0}$.
\end{enumerate}
Note that together, the two parts of the conjecture imply that $\psi_{ij}^{\text{n}} \in \mathbb{N}[[\vec{z}]]_{K_0}$.
\end{conjmain}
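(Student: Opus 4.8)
\medskip\noindent\textbf{Sketch of a proof strategy.}
The two parts of the conjecture are essentially independent, and I would attack them with different tools; since part~\ref{it:1a} and part~\ref{it:1b} together force $\psi_{ij}^{\text{n}}\in\NN[[K_0]]$, it is enough to prove each on its own. The structural input common to both is the ``fundamental period'' description of $\phi_0$. Substituting $x_{ij}=z_{ij}\,t^{\vec v_{ij}}$, a Laurent monomial in $t=(t_1,\dots,t_d)$, into the rational generating series
\[
\prod_{i=1}^{p}\frac{1}{1-\sum_{j=1}^{q_i}x_{ij}}\;=\;\sum_{\vec k\in\NN^I}\Bigl(\prod_{i=1}^{p}\comb(k_{ij})_{j=1}^{q_i}\Bigr)\,\vec x^{\vec k}
\]
and extracting the coefficient of $t^{\vec 0}$ --- which retains exactly the exponents $\vec k$ with $\V(\vec k)=\vec 0$, that is $\vec k\in K_0$ --- gives $\phi_0(\vec z)=[t^{\vec 0}]\prod_{i=1}^{p}(1-\sum_j z_{ij}t^{\vec v_{ij}})^{-1}$, with an analogous description of $\phi_{ij}$ obtained by differentiating at $\eps=0$ a one-parameter $\Gamma$-deformation of the $i$-th factor, so that $\partial_\eps|_0$ produces the digamma difference $H(D_i)-H(k_{ij})$ with $D_i=\sum_{j'}k_{ij'}$. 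When $\dim_{\Q}(K\otimes\Q)=1$ (the case of the quintic) this is a genuine one-variable hypergeometric period and $\phi_0$ solves a Picard--Fuchs ODE; in general one obtains the Gelfand--Kapranov--Zelevinsky, equivalently Batyrev--Borisov, system of PDEs attached to $(\vec v_{ij})$, whose order and exponents are controlled by the Fano polytope $\Delta=\sum_i\Delta_i$.

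\medskip\noindent\emph{Positivity (part~\ref{it:1b}).}
To prove $\phi_{ij}/\phi_0\in\Q_{\ge0}[[K_0]]$ I would generalize the Wronskian argument of Krattenthaler--Rivoal \cite{krattenthaler2011analytic}. In the rank-one case, $\phi_0$ and $\phi_0\log z_{ij}+\phi_{ij}$ span the solution space of a second-order ODE $y''+Py'+Qy=0$ with $P,Q$ explicit rational functions read off from $\Delta$; then $(\phi_{ij}/\phi_0)'=W/\phi_0^{2}-1/z_{ij}$, where the Wronskian $W$ satisfies $W'=-PW$ and so is an explicit algebraic function, and one must check that $W/\phi_0^{2}-1/z_{ij}$ has non-negative Taylor coefficients (integrating then transfers positivity to $\phi_{ij}/\phi_0$). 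This last step is exactly where the Fano hypothesis enters --- it is what makes $W$ ``large enough'', i.e.\ guarantees that $W/\phi_0^{2}-1/z_{ij}$ has no pole or sign change obstructing positivity; alternatively one could hope to extract a direct combinatorial formula for these coefficients in terms of lattice points of dilates of $\Delta$, where Fano-ness makes the relevant regions well behaved. In higher rank, $W$ is replaced by an appropriate maximal minor of the fundamental solution matrix of the GKZ system. I expect this to be \emph{the main obstacle}: positivity of a quotient of two series with positive coefficients is genuinely delicate, it is the one place where the Fano hypothesis is essential, and I do not see a uniform argument --- so I would anticipate being able to prove part~\ref{it:1b} only under extra hypotheses, such as $\dim_{\Q}(K\otimes\Q)=1$ or the nef-partition case.

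\medskip\noindent\emph{Integrality (part~\ref{it:1a}).}
To prove $\psi_{ij}^{\text{n}}=\exp(\phi_{ij}/\phi_0)\in\Z[[K_0]]$ I would argue $p$-adically for each prime $p$, in the spirit of Lian--Yau \cite{Lian1998} and Kontsevich--Schwarz--Vologodsky \cite{KSV}. By a $p$-adic exponentiation lemma of Dwork it is sufficient to show, for every $p$, that $\phi_{ij}/\phi_0-\tfrac1p\,(\phi_{ij}/\phi_0)(\vec z^{\,p})\in\Z_p[[K_0]]$ (here $\vec z^{\,p}$ means replacing $\vec k$ by $p\vec k$); since $\phi_0\in1+(\text{higher order})$ is a unit in $\Z_p[[K_0]]$, this reduces to Dwork-type congruences relating $\phi_0$ and $\phi_{ij}$ at $\vec z$ and at $\vec z^{\,p}$. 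Those in turn follow from two inputs: (i) Lucas-type congruences modulo $p$ for the multinomials $\comb(k_{ij})_{j=1}^{q_i}$, valid because multinomials factor along base-$p$ digit expansions and vanish modulo $p$ when a carry occurs; and (ii) the standard $p$-adic control of the harmonic corrections $H(D_i)-H(k_{ij})$, which are precisely the $\eps$-derivatives at $0$ of the ratio of $p$-adic Gamma functions entering the deformation above. This step should be technically substantial but conceptually routine; unlike positivity, it is plausible that it goes through without the full Fano hypothesis beyond Assumption~\ref{ass}, although some instance of it may well be needed to make the congruence bookkeeping close up.
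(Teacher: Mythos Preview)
The statement is a \emph{conjecture}; the paper does not prove it in general. What the paper does prove are the two partial results Theorems~\ref{thm:conj1a} and~\ref{thm:conj1b}: part~\eqref{it:1a} under the extra hypothesis $K_0\cong\NN^r$, and part~\eqref{it:1b} under the extra hypothesis $\mathrm{rk}\,K=1$. So the relevant comparison is between your strategy in those regimes and the paper's actual arguments.

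\medskip
\textbf{Part~\eqref{it:1a}.} Your $p$-adic Dwork-congruence outline is essentially the machinery that underlies Delaygue's theorem~\cite{delaygue2013criterion}, which the paper simply quotes as a black box. The paper's own contribution is not a new congruence argument but rather the combinatorial Proposition~\ref{prop:eq_crit}: under $K_0\cong\NN^r$, the Fano condition on $(\vec v_{ij})$ is \emph{equivalent} to Delaygue's analytic criterion $\Delta_{e,f}(\vec x)\ge 1$ on $[0,1)^r\setminus\mathcal X$. The proof is a lattice-point argument built on the short exact sequence $0\to\Z^r\xrightarrow{F}\Z^I\xrightarrow{\V}\Z^d\to0$, identifying interior lattice points of $\Delta$ with the image under $\V\circ\{F\}$ of points landing in $\mathcal Y$ (Lemmas~\ref{lem:latt_points} and~\ref{lem:0 latt point}). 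Your sketch does not mention this step at all, and it is the only new idea in the paper's proof of Theorem~\ref{thm:conj1a}. Relatedly, your closing remark that integrality ``plausibly goes through without the full Fano hypothesis'' is in tension with this equivalence: in the regime $K_0\cong\NN^r$, Fano is exactly Delaygue's criterion, and Delaygue has partial converses.

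\medskip
\textbf{Part~\eqref{it:1b}.} Here your route genuinely diverges from the paper's, and yours has a gap. You propose to use a second-order ODE satisfied by $\phi_0$ and $\phi_0\log z+\phi_{ij}$, with coefficients ``explicit rational functions read off from $\Delta$'', and then analyze the Wronskian. But even in rank one there is no such second-order equation in general: $\phi_0$ and $\phi_0\log z+\phi_{ij}$ are two among several solutions of a higher-order Picard--Fuchs/GKZ equation (order four already for the quintic), and the two-dimensional span they generate is not cut out by a second-order operator with coefficients determined by $\Delta$ alone. The paper bypasses differential equations entirely and uses a purely combinatorial criterion from \cite[Lemmas~2.1--2.2]{krattenthaler2011analytic}: if $a_0=1$, $a_1>0$, $(a_n)$ is log-convex, and $(c_n)$ is nonnegative and increasing, then $\bigl(\sum a_nc_nz^n\bigr)/\bigl(\sum a_nz^n\bigr)\in\Q_{\ge0}[[z]]$. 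The work is then to check log-convexity of $a_n=\prod_\ell\comb(nf_{\ell 1},\dots,nf_{\ell q_\ell})$ (Lemma~\ref{lem:Phi_convex}, via a monotonicity trick for averages of convex decreasing functions) and monotonicity of $c_n=H(ne_i)-H(nf_{ij})$ (Lemma~\ref{lem:H_k increasing}). This is both more elementary and more robust than a Wronskian argument, and---as the paper notes---it does not even use the Fano hypothesis in rank one.
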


\begin{example}\label{eg:quintic}
    To recover the quintic map discussed in the first section, let \[(\vec{v}_j) = ((1,0,0,0),(0,1,0,0),(0,0,1,0),(0,0,0,1),(-1,-1,-1,-1)).\] 
    Then we have $K_0 = \{(k,k,k,k,k): k \in \NN\}$, and we find that $\phi_0(z)$ is given by the formula \eqref{eq:quint_phi0}, all $\phi_j(z)$ are equal and given by the formula \eqref{eq:quint_phi1}, and
    $$Q_{\text{quintic}}(q) = q^5 \cdot \psi^{\text{n}}_1\left(q^5\right)^5.$$
    Thus we find that Conjecture \ref{conj:1} says that $\psi^{\text{n}}_1 = Q_{\text{quintic}}^{1/5}/q$ has integer coefficients, and $\phi_1/\phi_0$ has positive coefficients, as remarked in the previous section. 
\end{example}

In order to formulate our second conjecture, concerning the true mirror map, we extend the definition of $\comb$:
$$\comb(k_1,\ldots,k_m) \coloneqq (-1)^{k_j+1} \frac{\left(\sum_{i=1}^m k_i\right)! (-k_j-1)!}{\prod_{\substack{1 \le i \le m\\ i \neq j}} k_i!},$$
defined if $k_i \ge 0$ for $i \neq j$, $k_j < 0$, and $\sum_{i=1}^m k_i \ge 0$. 
We define a new monoid:
$$K_{ij} = \left\{\vec{k} \in K: k_{lm} \ge 0 \text{ if and only if $(l,m) \neq (i,j)$; and } \sum_{l=1}^{q_i} k_{il} \ge 0\right\}.$$
We assume that $\vec{v}_{ij} \neq 0$, and define
$$\tau_{ij} = \sum_{\vec{k} \in K_{ij}} \left( \prod_{r=1}^p \comb(k_{r1},\dots,k_{rq_r}) \right) \cdot \vec{z}^{\vec{k}} \in \Q[[\vec{z}]]_{K_{ij}},$$
and we define the \emph{true mirror map} to be
$$\psi_{ij}^{\text{t}} = \exp((\phi_{ij} + \tau_{ij})/\phi_0) \in \Q[[\vec{z}]]_{K_0+K_{ij}}.$$

\begin{conjmain}\label{conj:2}
    If $(\vec{v}_{ij})$ are Fano, then $\psi_{ij}^{\text{t}}$ has integer coefficients for all $(i,j) \in I$.
\end{conjmain}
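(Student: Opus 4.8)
The plan is to prove integrality one prime at a time: fixing a prime $p$ (unrelated to the paper's use of $p$), it suffices to show $\psi^{\text{t}}_{ij}\in\ZZ_p[[K_0+K_{ij}]]$. Since $\phi_0$ has constant term $1$ it is invertible in $\ZZ_p[[K_0+K_{ij}]]$, so $h\coloneqq(\phi_{ij}+\tau_{ij})/\phi_0$ is a well-defined element of $\Q_p[[K_0+K_{ij}]]$ with vanishing constant term. Writing $F\mapsto F^{(p)}$ for the ring endomorphism $\vec z^{\vec k}\mapsto\vec z^{\,p\vec k}$, Dwork's lemma says that $\exp(h)\in\ZZ_p[[K_0+K_{ij}]]$ if and only if $h-\tfrac1p h^{(p)}\in\ZZ_p[[K_0+K_{ij}]]$; clearing the unit denominator $\phi_0\phi_0^{(p)}$, this is the single congruence
\begin{equation*}
p\,(\phi_{ij}+\tau_{ij})\cdot\phi_0^{(p)}\ \equiv\ (\phi_{ij}+\tau_{ij})^{(p)}\cdot\phi_0\pmod{p\,\ZZ_p[[K_0+K_{ij}]]}.\tag{$\star$}
\end{equation*}
In practice I would instead prove the more robust statements that the coefficient sequence $(A(\vec k))_{\vec k\in K_0}$ of $\phi_0$ satisfies the Dwork congruences $A(\vec k+p^{s+1}\vec m)\,A(\lfloor\vec k/p\rfloor)\equiv A(\vec k)\,A(\lfloor\vec k/p\rfloor+p^s\vec m)\pmod{p^{s+1}}$, that the coefficients of the numerator $\phi_{ij}+\tau_{ij}$ satisfy the congruences obtained from these by a formal derivative, and then invoke the standard lemma (Dwork; see also \cite{krattenthaler2011analytic}) converting such a pair of congruences into the integrality of $z_{ij}\exp(h)$; this route is insensitive to the number of variables.

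For the part of the numerator coming from $\phi_0$ and $\phi_{ij}$ this parallels the known integrality proofs for the \emph{naive} mirror map. The coefficient $A(\vec k)$ of $\phi_0$ is a product of balanced multinomials, one for each polytope $\Delta_i$, and Assumption~\ref{ass} together with the Fano hypothesis forces the associated step functions to be non-negative throughout $K_0$ --- the combinatorial input under which such products are known to satisfy Dwork congruences (Dwork; Krattenthaler--Rivoal; Delaygue--Rivoal--Roques). The harmonic-number weight defining $\phi_{ij}$ is precisely the formal $\partial/\partial\epsilon_{ij}$ of $A(\vec k)$ under the deformation $k_{im}\mapsto k_{im}+\epsilon_{im}$ in the $\Gamma$-function presentation of the multinomials, so its congruences follow by differentiating those of $\phi_0$, just as in the existing arguments proving $\psi^{\text{n}}_{ij}\in\ZZ[[K_0]]$ in the nef-partition case. (Alternatively one could write $\psi^{\text{t}}_{ij}=\psi^{\text{n}}_{ij}\cdot\exp(\tau_{ij}/\phi_0)$ and, granting part~\eqref{it:1a} of Conjecture~\ref{conj:1}, reduce to integrality of $\exp(\tau_{ij}/\phi_0)$; but $\tau_{ij}/\phi_0$ is itself non-integral, so nothing is gained and it is cleanest to treat the whole numerator at once.)

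The genuinely new ingredient, and the step I expect to be the main obstacle, is the correction term $\tau_{ij}$. Its coefficients involve the extended $\comb$ with one negative argument --- a regularized factorial ratio that need not even be $p$-integral --- so $(\star)$ genuinely entangles $\tau_{ij}$ with $\phi_0$. I would handle this by recognizing $\phi_{ij}+\tau_{ij}$ as $\partial_{\epsilon_{ij}}|_{\vec\epsilon=0}$ of a single ``master'' hypergeometric series $\Phi(\vec z;\vec\epsilon)$ whose summation index runs over $K_0\cup K_{ij}$: the extra $K_{ij}$-terms, where $k_{ij}<0$, enter $\Phi$ only through the simple zero of $1/\Gamma(k_{ij}+1+\epsilon_{ij})$ at a negative integer, and $\partial_{\epsilon_{ij}}$ converts that zero into exactly the residue appearing in $\tau_{ij}$. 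One then transfers Dwork congruences to the coefficients of $\Phi$ by expressing these regularized factorial ratios through the $p$-adic Gamma function via the Gross--Koblitz formula (whose Dwork congruences are classical) and differentiates in $\epsilon_{ij}$ as before; the Fano hypothesis re-enters to keep the relevant $p$-adic valuations bounded below on the enlarged monoid $K_0+K_{ij}$, playing there the role of the interiority condition of Assumption~\ref{ass}. I expect the ``extra hypotheses'' of the theorem to be exactly what lets this argument run uniformly --- for instance requiring $K$ to have rank one (so that $\Phi$ is a one-variable $p$-adic hypergeometric function and one may use its Frobenius structure directly), or requiring the data to arise from a nef partition (so that $\Phi$ is literally a period of a family of Calabi--Yau complete intersections over $\ZZ$, and one may quote the theorem of Kontsevich--Schwarz--Vologodsky \cite{KSV}, or its refinements via Dwork crystals, that period ratios at the large-complex-structure limit of such a family have $p$-integral Taylor coefficients).
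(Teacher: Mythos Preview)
The statement you are attempting to prove is labelled \emph{Conjecture} in the paper, and the paper does not claim a proof of it. What the paper actually does for \Cref{conj:2} is: (i) observe that it coincides with \Cref{conj:1}\,\eqref{it:1a} whenever $\tau_{ij}=0$ (Lemma~\ref{lem:12equiv}), so that Theorem~\ref{thm:conj1a} gives it under the two hypotheses $K_0\cong\NN^r$ and the combinatorial vanishing criterion for $\tau_{ij}$; (ii) quote the Beukers--Vlasenko result (Theorem~\ref{thm:beukersvlasenko}), which under strong symmetry hypotheses gives only \emph{near}-integrality (finitely many bad primes) of a one-variable specialization; (iii) quote the HMS-based Theorem~\ref{thm:hms}, which is contingent on unproven foundational results; and (iv) report computer checks. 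There is no proof in the paper to compare your proposal against.

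As a strategy, your outline is reasonable in spirit but is not a proof, and several steps do not go through at the generality of the conjecture. First, your appeal to Dwork congruences for $\phi_0$ ``under the Fano hypothesis'' overstates what is known: the paper establishes the equivalence of Fano with Delaygue's criterion (Proposition~\ref{prop:eq_crit}) only under the additional assumption $K_0\cong\NN^r$, and Delaygue's theorem itself is formulated in that setting; without it there is no translation to Delaygue's framework, and the required congruences for $A(\vec k)$ are not established. Second, the crucial step---handling $\tau_{ij}$ via a ``master'' series $\Phi(\vec z;\vec\epsilon)$, $p$-adic $\Gamma$, and Gross--Koblitz---is a plan, not an argument: you neither state the precise congruences you need for the regularized factorial ratios on $K_{ij}$, nor verify that the Fano hypothesis controls the $p$-adic valuations on the enlarged monoid $K_0+K_{ij}$. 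Your closing paragraph in effect concedes this, since you expect the argument to run only under ``extra hypotheses'' such as $\operatorname{rk} K=1$ or a nef partition; but then you are no longer proving \Cref{conj:2}, and even in those cases the literature you invoke (Kontsevich--Schwarz--Vologodsky, Dwork crystals) does not currently yield the full statement---compare Theorem~\ref{thm:beukersvlasenko}, which even with a large symmetry group only excludes finitely many primes. In short: the paper treats this as an open conjecture, and your proposal, while a plausible line of attack, has genuine gaps at exactly the points (general $K_0$, and the $\tau_{ij}$ term) that distinguish \Cref{conj:2} from what is already known.
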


We remark that, in order for the infinite sum defining $\psi_{ij}^{\text{t}}$ to make sense, we need the following Lemma:

\begin{lem}[Cf. Lemma 9.2 of \cite{BeukersVlasenkoIII}] \label{lem:cone_conv}
If $(\vec{v}_{ij})$ are Fano, then the cone generated by $K_0$ and $K_{ij}$ is strictly convex.
\end{lem}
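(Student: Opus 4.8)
The plan is to trap the cone $C$ generated by $K_0$ and $K_{ij}$ inside an explicit, manifestly pointed convex cone, and then read off pointedness of $C$. Everything takes place in the real vector space $K \otimes \mathbb{R} \subseteq \mathbb{R}^I$. First I would observe that by construction every element of $K_0$ has all coordinates $\ge 0$, and every element of $K_{ij}$ has all coordinates $\ge 0$ except (possibly) the $(i,j)$-th one. Hence $K_0 \cup K_{ij}$ is contained in
\[
P \coloneqq \{\vec{x} \in K \otimes \mathbb{R} : x_{lm} \ge 0 \text{ for all } (l,m) \in I \text{ with } (l,m) \neq (i,j)\},
\]
which is a convex cone (the intersection of the linear subspace $K \otimes \mathbb{R}$ with a finite collection of closed half-spaces through the origin). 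Since $C$ is the smallest convex cone containing $K_0 \cup K_{ij}$, we get $C \subseteq P$, so it suffices to show that $P$ is strictly convex, i.e. that $P \cap (-P) = \{\vec{0}\}$.

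For that, take $\vec{x} \in P \cap (-P)$. For each $(l,m) \neq (i,j)$, membership in $P$ gives $x_{lm} \ge 0$ while membership in $-P$ gives $x_{lm} \le 0$, so $x_{lm} = 0$; thus $\vec{x} = x_{ij} \cdot \vec{e}_{ij}$. But $\vec{x} \in K = \ker \V$, so $\vec{0} = \V(\vec{x}) = x_{ij} \cdot \vec{v}_{ij}$, and since we have assumed $\vec{v}_{ij} \neq \vec{0}$, this forces $x_{ij} = 0$, whence $\vec{x} = \vec{0}$. Therefore $P$, and a fortiori its subcone $C$, is strictly convex (and this argument also covers the degenerate case $K_{ij} = \emptyset$, where $C$ is just the cone generated by $K_0$).

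I do not expect a genuine obstacle here: the only properties used are that the generators are non-negative away from the single coordinate $(i,j)$ and that they lie in $K$, while the extra condition $\sum_{l=1}^{q_i} k_{il} \ge 0$ in the definition of $K_{ij}$ is irrelevant to this lemma (it is imposed only so that the coefficients $\comb(k_{ij})_{j=1}^{q_i}$ appearing in $\tau_{ij}$ are defined). The one point that must not be glossed over is the final step: without the constraint $\vec{x} \in K$, the cone $P$ would contain the whole line $\mathbb{R} \cdot \vec{e}_{ij}$ and would fail to be pointed, so it is precisely the interaction of the coordinate positivity with the linear relation $\V(\vec{x}) = \vec{0}$, together with $\vec{v}_{ij} \neq \vec{0}$, that makes the statement true.
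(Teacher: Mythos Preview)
Your argument is correct, and in fact more elementary than the paper's. The paper proceeds by exhibiting an explicit linear functional $\vec{w}$ (namely $w_{ij}=\tfrac12$, all other $w_{lm}=1$) that is strictly positive on every nonzero element of $K_0 \cup K_{ij}$; verifying positivity on $K_{ij}$ requires the Fano hypothesis, via the observation that $\vec{v}_{ij} = \sum_{(l,m)\neq(i,j)} \frac{k_{lm}}{-k_{ij}}\vec{v}_{lm}$ would be a nonzero interior lattice point of $\Delta$ if the coefficients summed to less than $1$. By contrast, you only use the standing assumption $\vec{v}_{ij}\neq \vec 0$ together with the constraint $\V(\vec x)=0$ coming from $K$, so your proof applies even when the data are not Fano and confirms your remark that the condition $\sum_l k_{il}\ge 0$ plays no role here. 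The price is that you do not produce an explicit separating functional; the paper's functional (rescaled to $2\vec 1 - \vec 1_{ij}$) is actually reused later, in the appendix, to guarantee that the truncation sets $K_{0,ij}(d')$ are finite for the computer checks. So both approaches are valid: yours is cleaner for the bare statement, while the paper's yields a concrete grading that is exploited elsewhere.
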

\begin{proof}
    It suffices to produce a vector $\vec{w}$ such that $\langle \vec{w},\vec{k} \rangle \ge 0$ for $\vec{k}$ in $K_0$ or $K_{ij}$, with equality if and only if $\vec{k} = 0$. We claim that setting $w_{ij} = 1/2$ and all other $w_{lm} = 1$ does the trick. 
    Indeed, if $\vec{k} \in K_0$, then it is clear that $\langle \vec{w},\vec{k} \rangle \ge 0$ with equality if and only if $\vec{k} = 0$. 
    If $\vec{k} \in K_{ij}$, then we have
    \begin{align*}
        0&=\sum_{(l,m) \in I} k_{lm} \vec{v}_{lm} \\
        \Rightarrow \vec{v}_{ij} &= \sum_{lm \neq ij} \frac{k_{lm}}{-k_{ij}} \vec{v}_{lm}.
    \end{align*}
    Now if $\sum_{lm \neq ij} \frac{k_{lm}}{-k_{ij}} <1$, then $\vec{v}_{ij}$ would be an additional interior lattice point of $\Delta$, which we have assumed to be non-zero, contradicting the assumption that $\vec{v}_{lm}$ are Fano. 
    Thus we have $\sum_{lm \neq ij} \frac{k_{lm}}{-k_{ij}} \ge 1$, and hence $\sum_{lm} k_{lm} - \frac{1}{2}k_{ij} > 0$ as required.
\end{proof}

The classical conjecture about the integrality of coefficients of mirror maps, see \cite[Conjecture 6.3.4]{batyrev1995generalized}, is equivalent to a special case of Conjecture \ref{conj:2} arising from a nef partition (see \cite[Section 6.3.4]{cox1999mirror} for an explanation of how to derive the formula, and \cite[Appendix C]{ganatra2024integrality} for the explicit derivation of the formula in the case $p=1$, see also \cite[Appendix]{BeukersVlasenkoIII} and \cite[Section 4]{adolphson2014}). 
Conjecture \ref{conj:2} is only equivalent to Conjecture \ref{conj:1} \eqref{it:1a} under an extra hypothesis:

\begin{lem}\label{lem:12equiv}
    If the origin does not lie in the interior of the convex hull of the vectors $\vec{v}_{\ell m}$ for $(\ell,m) \neq (i,j)$, and $-\vec{v}_{ij}$, then $K_{ij} = \emptyset$. In particular, $\tau_{ij} = 0$, so the naive mirror map is equal to the true mirror map, and Conjecture \ref{conj:1} \eqref{it:1a} is equivalent to Conjecture \ref{conj:2}.
\end{lem}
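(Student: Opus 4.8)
The plan is to prove the contrapositive: if $K_{ij}\neq\emptyset$, then the origin \emph{does} lie in the interior of $P\coloneqq\operatorname{conv}\big(\{\vec v_{\ell m}:(\ell,m)\neq(i,j)\}\cup\{-\vec v_{ij}\}\big)$. Concretely, I would assume simultaneously that the origin is not interior to $P$ and that $K_{ij}\neq\emptyset$, and derive a contradiction by producing a linear functional that supports $P$ at the origin but is forced to vanish identically.

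First I would record that $P$ is full-dimensional: the set $\{\vec v_{\ell m}:(\ell,m)\neq(i,j)\}\cup\{-\vec v_{ij}\}$ spans the same $\Q$-subspace as the collection of all the $\vec v_{\ell m}$ (since $-\vec v_{ij}$ spans the same line as $\vec v_{ij}$), which is $\Q^d$ by Assumption~\ref{ass}. Hence, by the supporting/separating hyperplane theorem, the hypothesis that the origin is not interior to $P$ produces a vector $\vec w\neq\vec 0$ with $\langle\vec w,x\rangle\le 0$ for all $x\in P$; equivalently, $\langle\vec w,\vec v_{\ell m}\rangle\le 0$ for every $(\ell,m)\neq(i,j)$ and $\langle\vec w,\vec v_{ij}\rangle\ge 0$.

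Next I would take any $\vec k\in K_{ij}$ and pair the identity $\sum_{(\ell,m)}k_{\ell m}\vec v_{\ell m}=\vec 0$ with $\vec w$. The sign conditions built into $K_{ij}$, namely $k_{ij}<0$ and $k_{\ell m}\ge 0$ for all other $(\ell,m)$, combine with the above inequalities to make every term of $\sum_{(\ell,m)}k_{\ell m}\langle\vec w,\vec v_{\ell m}\rangle$ non-positive; since the total is zero, each term vanishes. In particular $k_{ij}\langle\vec w,\vec v_{ij}\rangle=0$, and $k_{ij}\neq 0$ forces $\langle\vec w,\vec v_{ij}\rangle=0$, so $\langle\vec w,\vec v_{\ell m}\rangle\le 0$ for \emph{all} $(\ell,m)\in I$. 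Now I would invoke the other half of Assumption~\ref{ass}, which supplies $\vec k'\in K\cap\NN_{>0}^I$: pairing $\sum_{(\ell,m)}k'_{\ell m}\vec v_{\ell m}=\vec 0$ with $\vec w$ writes $0$ as a sum of the non-positive numbers $\langle\vec w,\vec v_{\ell m}\rangle$ with strictly positive weights $k'_{\ell m}$, forcing $\langle\vec w,\vec v_{\ell m}\rangle=0$ for every $(\ell,m)$. Since the $\vec v_{\ell m}$ span $\Z^d$ this gives $\vec w=\vec 0$, a contradiction. Thus $K_{ij}=\emptyset$, so $\tau_{ij}$ is the empty sum and equals $0$, whence $\psi_{ij}^{\text{t}}=\exp\big((\phi_{ij}+\tau_{ij})/\phi_0\big)=\exp(\phi_{ij}/\phi_0)=\psi_{ij}^{\text{n}}$, and the equivalence of the relevant parts of \Cref{conj:1}\eqref{it:1a} and \Cref{conj:2} follows at once.

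The argument is short and I do not anticipate a genuine obstacle; the only step meriting a moment's care is the passage from ``the origin is not in the interior of $P$'' to an honest supporting functional $\vec w$, which needs $P$ to be full-dimensional — and that is exactly what the spanning part of Assumption~\ref{ass} guarantees.
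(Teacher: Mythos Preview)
Your proof is correct, but it follows a different (dual) route from the paper's. The paper argues constructively on the primal side: given $\vec k\in K_{ij}$ and $\vec k'\in K\cap\R_{>0}^I$ from Assumption~\ref{ass}, it forms $\vec k+\eps\vec k'$ for small $\eps>0$ and rewrites the relation $\sum(k_{\ell m}+\eps k'_{\ell m})\vec v_{\ell m}=\vec 0$ as a combination of $-\vec v_{ij}$ and the $\vec v_{\ell m}$ for $(\ell,m)\neq(i,j)$ with strictly positive coefficients, thereby exhibiting the origin directly as a point of the interior of $P$. You instead work on the dual side, assuming the origin is not interior and producing a supporting functional $\vec w$, then using the two relations in $K$ to force $\vec w=\vec 0$. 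Both arguments ultimately draw on the same two halves of Assumption~\ref{ass} (the interior-point hypothesis to supply $\vec k'$, and the spanning hypothesis to finish), and both are short; the paper's version avoids invoking the separating-hyperplane theorem, while yours avoids the $\eps$-perturbation step. One minor remark: full-dimensionality of $P$ is not actually required to obtain your functional $\vec w$---if $P$ were lower-dimensional you could take $\vec w$ normal to its affine hull---so that ``moment's care'' is unnecessary, though of course harmless.
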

\begin{proof}
    We prove the contrapositive; so let us suppose that $K_{ij} \neq \emptyset$. 
    Then there exists a non-zero vector $\vec{k} \in K_{ij}$: so 
    $$\sum_{(\ell,m) \in I} k_{\ell m} \vec{v}_{\ell m} = \vec{0}$$
    where $k_{ij}<0$ and $k_{\ell m} \ge 0$ for $(\ell,m) \neq (i,j)$. 
    On the other hand, by Assumption \ref{ass}, there exists $\vec{k}' \in K \cap (\R_{>0})^I$. 
    Now let $\eps>0$ be sufficiently small that $k_{ij} + \eps \cdot k'_{ij} < 0$. 
    Then we have
    $$ \vec{0} = (-k_{ij} - \eps \cdot k'_{ij}) \cdot (-\vec{v}_{ij}) + \sum_{(\ell,m) \in I \setminus \{(i,j)\}} (k_{\ell m} +\eps \cdot k'_{\ell m}) \cdot \vec{v}_{\ell m},$$
    where the coefficients in front of $-\vec{v}_{ij}$ and $\vec{v}_{\ell m}$ are all strictly positive. 
    It follows that $\vec{0}$ lies in the interior of the convex hull of the vectors $\vec{v}_{\ell m}$ for $(\ell,m) \neq (i,j)$ and $-\vec{v}_{ij}$. 
\end{proof}

\begin{example}\label{eg:rank2}
    Let $(\vec{v}_j) = ((0,1),(1,1),(0,-1),(-1,1))$. Then we have an isomorphism
    \begin{align*}
        \Z^2 & \xrightarrow{\sim} K\\
        (a,b) & \mapsto (a,b,a+2b,b).
    \end{align*}
    under this isomorphism, $K_0 \cong \{(a,b):a \ge 0, b \ge 0\}$; $K_j = \emptyset$ for $j \neq 1$; and $K_1 = \{(a,b): a< 0, a+2b \ge 0\}$. 
    One may easily see that the coefficient of $z_1^{-2}z_2z_4$ in $\psi_1^{\text{t}}$ is equal to the coefficient of the same monomial in $\tau_1$, which is $-1$. 
    This shows both that $\psi_1^{\text{t}}$ has a negative coefficient, so the analogue of Conjecture \ref{conj:1} \eqref{it:1b} does not hold in this case; and also that $\psi_1^{\text{t}} \neq \psi_1^{\text{n}}$, as $(-2,1,0,1) \notin K_0$, so $z_1^{-2}z_2z_4 \notin \Q[[\vec{z}]]_{K_0}$.
\end{example}

\subsection{Known cases of Conjectures \ref{conj:1} and \ref{conj:2}}

The natural generalization of Example \ref{eg:quintic} is the case when $\vec{v}_1,\ldots,\vec{v}_d$ are a basis for $\Z^d$, and $\vec{v}_{d+1} = -\sum_{j=1}^d \vec{v}_j$. 
In this case we have $\psi_j^{\text{t}} = \psi_j^{\text{n}}$ by Lemma \ref{lem:12equiv}, so Conjectures \ref{conj:1} \eqref{it:1a} and \ref{conj:2} are equivalent; furthermore, all $\psi_j^{\text{t}}$ are equal by symmetry, so we will denote them all by $\psi$. 
The integrality part of our conjectures then says that $\psi$ should have integer coefficients. 
Lian--Yau proved that $\psi^d$ has integer coefficients when $d$ is prime \cite{Lian1998}; Zudilin extended this to the case that $d$ is a prime power \cite{Zudilin2002}; Lian--Yau proved that $\psi$ has integer coefficients when $d$ is prime \cite{Lian-Yau-nthroot}; and Krattenthaler--Rivoal proved this for general $d$, thus establishing Conjectures \ref{conj:1} \eqref{it:1a} and \ref{conj:2} in this case \cite{krattenthaler2010integrality}. 
Conjecture \ref{conj:1} \eqref{it:1b} was proved in this case by Krattenthaler--Rivoal \cite{krattenthaler2011analytic}.

Krattenthaler--Rivoal's integrality result covered a broad class of single-variable mirror maps (i.e., examples in which the rank of $K$ is $1$), which was subsequently enlarged by Delaygue \cite{Delaygue_single_variable}. 
The first results concerning integrality of multivariate mirror maps were obtained by Krattenthaler--Rivoal \cite{Krattenthaler_Rivoal_multivariate}, and subsequently generalized by Delaygue \cite{delaygue2013criterion}.

Delaygue gives a criterion for the integrality of mirror maps, which we show under certain hypotheses to be (non-obviously) equivalent to the condition that $\vec{v}_{ij}$ are Fano. 
As a result, we obtain the following result (substantially due to Delaygue, modulo our proof in Section \ref{sec:equivalence of Fano and Delaygue} of the equivalence of his criterion with our Fano hypothesis):

\begin{thm}\label{thm:conj1a}
    Suppose that we have an isomorphism of monoids, $K_0 \cong \NN^r$. Then Conjecture \ref{conj:1} \eqref{it:1a} holds.
\end{thm}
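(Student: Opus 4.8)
The plan is to reduce \Cref{conj:1}\,\eqref{it:1a}, under the hypothesis $K_0 \cong \NN^r$, to Delaygue's integrality criterion for multivariate mirror maps \cite{delaygue2013criterion}, and then to prove separately that the Fano hypothesis implies (in fact, is equivalent to) that criterion.

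\emph{Step 1: recognising a hypergeometric series.} First I would fix an isomorphism $K_0 \cong \NN^r$, with $\vec{g}_1,\dots,\vec{g}_r \in K_0 \subset \NN^I$ the images of the standard generators, and write $\vec{k} = \sum_a n_a \vec{g}_a$ for $\vec{k} \in K_0$, $\vec{n}\in\NN^r$. Then each coordinate becomes a linear form $k_{ij} = \langle\vec{f}_{ij},\vec{n}\rangle$ with $\vec{f}_{ij}\in\NN^r$, and $\sum_j k_{ij} = \langle\vec{e}_i,\vec{n}\rangle$ with $\vec{e}_i := \sum_j\vec{f}_{ij}$; under the induced isomorphism $\Q[[K_0]]\cong\Q[[w_1,\dots,w_r]]$ one gets
$$\phi_0 = \sum_{\vec{n}\in\NN^r}\frac{\prod_{i=1}^p\langle\vec{e}_i,\vec{n}\rangle!}{\prod_{(i,j)\in I}\langle\vec{f}_{ij},\vec{n}\rangle!}\,\vec{w}^{\vec{n}},$$
a balanced hypergeometric series (since $\sum_i\vec{e}_i = \sum_{ij}\vec{f}_{ij}$), and, because the harmonic factor of $\phi_{ij}$ is $H(\langle\vec{e}_i,\vec{n}\rangle) - H(\langle\vec{f}_{ij},\vec{n}\rangle)$, the component $z_{ij}\psi_{ij}^{\mathrm{n}}$ of the naive mirror map is exactly the mirror-map coordinate Delaygue attaches to the denominator datum $\vec{f}_{ij}$ for the tuple $(\vec{e}_i;\vec{f}_{ij})$. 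The hypothesis $K_0\cong\NN^r$ is precisely what makes this work: it turns $\phi_0$ into an honest $r$-variable hypergeometric series rather than a restriction to a non-free sub-monoid, so that Delaygue's theorem applies on the nose.

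\emph{Step 2: Delaygue's criterion.} By Legendre's formula the $p$-adic valuation of the coefficient of $\vec{w}^{\vec{n}}$ in $\phi_0$ equals $\sum_{\ell\ge1}\xi(\vec{n}/p^{\ell})$, where $\xi(\vec{x}) = \sum_i\lfloor\langle\vec{e}_i,\vec{x}\rangle\rfloor - \sum_{ij}\lfloor\langle\vec{f}_{ij},\vec{x}\rangle\rfloor$ is a step function on $\R^r/\Z^r$, well defined by the balancing relation and everywhere $\ge 0$ since $\phi_0$ is a product of multinomial coefficients. Delaygue's theorem asserts that the mirror maps have integer coefficients provided $\xi$ satisfies an explicit lower bound $\xi \ge 1$ over a suitable locus of rational points of $\R^r/\Z^r$ depending on the distinguished vectors $\vec{f}_{ij}$; call this combinatorial condition $(\star)$. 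It therefore suffices to prove that $(\vec{v}_{ij})$ Fano implies $(\star)$.

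\emph{Step 3: Fano $\Rightarrow(\star)$, the crux.} Here I would argue by contradiction. If $(\star)$ fails there is a rational $\vec{x}$ in the relevant locus with $\xi(\vec{x}) = 0$ (it cannot be negative). Transport $\vec{x}$ to $\vec{k} := \sum_a x_a\vec{g}_a \in K\otimes\Q$, so that $\langle\vec{f}_{ij},\vec{x}\rangle = k_{ij}$, $\sum_{\ell m}k_{\ell m}\vec{v}_{\ell m} = \vec{0}$, and unwinding $\xi(\vec{k}) = 0$ gives $\sum_{m}\{k_{im}\} < 1$ for every $i$. Passing to the vector of fractional parts $c_{\ell m} := \{k_{\ell m}\}$ yields a relation $\sum_{\ell m}c_{\ell m}\vec{v}_{\ell m} = \vec{w}$ with $c_{\ell m}\in[0,1)\cap\Q$, $\sum_m c_{im}<1$ for all $i$, and $\vec{w} := -\sum_{\ell m}\lfloor k_{\ell m}\rfloor\vec{v}_{\ell m}$ a lattice point lying in $\Delta = \sum_i\Delta_i$ (each $\sum_m c_{im}\vec{v}_{im}$ being a convex combination of $\{\vec{v}_{im}\}_m$ and $\vec{0}$, hence a point of $\Delta_i$). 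The key point is then to show — using $\vec{0}\in\operatorname{int}\Delta$ from Assumption \ref{ass}, the precise shape of the failure locus of $(\star)$, and Minkowski-sum geometry ($\operatorname{int}(A+B)\supseteq\operatorname{int}(A)+B$), in the same spirit as the proofs of \Cref{lem:cone_conv} and \Cref{lem:12equiv} — that $\vec{w}$ is a \emph{nonzero, interior} lattice point of $\Delta$, contradicting the Fano hypothesis. Running the implications in reverse gives the converse, hence the stated equivalence; in particular $(\star)$ holds under the Fano hypothesis, and Delaygue's theorem yields \Cref{conj:1}\,\eqref{it:1a}.

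The hard part is Step 3: $(\star)$ is a purely arithmetic statement about fractional parts of the linear forms $\langle\vec{e}_i,\vec{x}\rangle$ and $\langle\vec{f}_{ij},\vec{x}\rangle$, and translating it into the convex-geometric Fano property of $\Delta$ — matching the exact failure locus of $(\star)$ with the configurations producing a second interior lattice point, and handling low-dimensional $\Delta_i$ with care — is exactly the non-obvious equivalence that carries the content of the theorem.
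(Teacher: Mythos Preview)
Your proposal is correct and follows the same approach as the paper: reduce to Delaygue's setup via the isomorphism $K_0\cong\NN^r$, then prove that the Fano hypothesis is equivalent to Delaygue's criterion by identifying failures of the criterion with nonzero interior lattice points of $\Delta$. Two minor points: $\psi_{ij}^{\mathrm{n}}$ is not a single Delaygue mirror map but the \emph{quotient} $q_{\vec{e}_i,e,f}/q_{\vec{f}_{ij},e,f}$ (harmless, since both factors are integral under the criterion and have constant term $1$); and the paper carries out Step~3 via auxiliary sets $\mathcal{X},\mathcal{Y}$ and handles the ``interior'' claim by a perturbation trick---adding a small $F(\vec{d})$ with $\vec{d}\in\R_{>0}^r$, which lies in $\ker\V$ and pushes $\vec{y}$ into $\mathcal{Y}^\circ$---rather than the Minkowski-sum inclusion you suggest.
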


\begin{rmk}
    Adolphson--Sperber have also given a reformulation of Delaygue's criterion in terms of lattice points in polytopes \cite[Theorem 1.12 (b)]{adolphson2018integrality}, which is similar in spirit to our result, but different from it. 
    To see the difference, consider the case that $d=1$ and $(\vec{v}_j) = ((1),(1),(-1))$. In this case the data are Fano, because the origin is the unique interior lattice point of the 1-dimensional polytope $\Delta=[-1,1]$; furthermore $K_0 = \{(a,b,a+b)|a\ge 0,b \ge 0\} \cong \NN^2$, so Conjecture \ref{conj:1} \eqref{it:1a} holds in this case by Theorem \ref{thm:conj1a}. 
    On the other hand, after translating into Delaygue's setup in accordance with Section \ref{sec:translation}, Adolphson--Sperber's result says that Delaygue's criterion is equivalent to the fact that $(1,1,0,0)$ is the unique interior lattice point of the lattice polytope in $\R^4$ obtained as the convex hull of the vectors
    $$(0,0,0,0),(3,0,0,0),(0,3,0,0),(0,0,3,0),(0,0,0,3),(3,6,-3,-3).$$
    This is true, but harder to check. 
    In general, when it applies, our criterion is simpler to check than Adolphson--Sperber's (and more closely tied to the toric geometry of the mirror construction); however, their criterion applies to cases of Delaygue's result which do not arise in accordance with Section \ref{sec:translation}, so is more general.
\end{rmk}

Of course, when the rank of $K$ is greater than one, the hypothesis $K_0 \cong \NN^r$ is very much non-generic. 
For example, we have:

\begin{example}
    If $(\vec{v}_j) = ((1,0),(0,1),(-2,1),(1,-2))$ then the map
    \begin{align*}
        \Z^2 & \to K\\
        (a,b) & \mapsto (2a-b,2b-a,a,b)
        \end{align*}
        is an isomorphism. 
    Under this isomorphism, $K_0$ get identified with $\{(a,b) \in \Z^2| 2a \ge b, 2b \ge a\}$, which is not isomorphic to $\NN^2$. Indeed, if it were, then the primitive generators of the extremal rays of the cone would have to form a basis for $K_0$; but the generators are $(1,2)$ and $(2,1)$, which do not generate the element $(1,1) \in K_0$, hence are not a basis. 
    Similarly, if $(\vec{v}_j) = ((1),(1),(-1),(-1))$ then $$K_0 = \{(a,b,c,d) \in \NN^4| a+b=c+d\} \not \cong \NN^3,$$
    as the cone has four extremal rays (generated by $(1,0,1,0),(1,0,0,1),(0,1,1,0)$, and $(0,1,0,1)$).  
    See also \cite[Lemma 1.6]{ganatra2024integrality}.
\end{example}

\begin{example}
    One can easily construct examples where the rank of $K$ is greater than one but the hypothesis $K_0 \cong \NN^r$ does hold, so that Delaygue's result implies Conjecture \ref{conj:1} \eqref{it:1a} by Theorem \ref{thm:conj1a}: this is the case, e.g., for Example \ref{eg:rank2}. 
    For examples which are more interesting from the perspective of mirror symmetry, one can consider the case that $(\vec{v}_j)$ is the set of lattice points lying in the interiors of codimension-$\ge 2$ faces of a 3-dimensional reflexive polytope, of which there are 4319 examples \cite{kreuzer1998classification}. 
    Specifically, for these cases, Conjecture \ref{conj:2} implies the case of \cite[Conjecture 6.3.4]{batyrev1995generalized} concerning K3 hypersurfaces in toric Fano threefolds; and we have explained the relationship between Conjecture \ref{conj:2} and \ref{conj:1} \eqref{it:1a} above. 
    Using the code described in Appendix \ref{sec:code}, we may find many examples where the condition $K_0 \cong \NN^r$ is satisfied with $r>1$.  
    For example, the first of these (in the order in which they are listed in the PALPreader package on SageMath) is $(\vec{v}_j) = ((1,0,0),(0,1,0),(0,0,1),(-1,-1,-1),(-1,-1,1)$: it is easy to verify that $K_0 \cong \NN^2$ is generated by $(1,1,1,1,0)$ and $(2,2,0,1,1)$ in this case.
\end{example}

We know even less about Conjecture \ref{conj:1} \eqref{it:1b}: using \cite{krattenthaler2011analytic}, we prove in Section \ref{sec:Conj1b proof} that

\begin{thm}\label{thm:conj1b}
    Suppose that $K$ has rank $1$. Then $\phi_{ij}/\phi_0 \in \Q_{\ge 0}[[\vec{z}]]_{K_0}$. 
    In particular, Conjecture \ref{conj:1} \eqref{it:1b} holds (but the result is more general: it holds even if $(\vec{v}_{ij})$ is not Fano).
\end{thm}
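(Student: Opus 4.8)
The plan is to reduce to a one-variable statement and then combine Kaluza's classical theorem on reciprocals of log-convex power series with an elementary monotonicity property of harmonic sums. Since $K$ has rank $1$, Assumption \ref{ass} forces $K_0$ to be freely generated by a single vector $\vec{k}^* = (k^*_{\ell m})_{(\ell,m)\in I}$ with all entries $k^*_{\ell m}\ge 1$; writing $z = \vec{z}^{\vec{k}^*}$ identifies $\Q[[K_0]]$ with $\Q[[z]]$, so it suffices to prove $\phi_{ij}/\phi_0\in\Q_{\ge0}[[z]]$ for a fixed $(i,j)\in I$. Setting $N_\ell = \sum_{m=1}^{q_\ell} k^*_{\ell m}$, one reads off from the definitions that $\phi_0(z) = \sum_{n\ge0} c_n z^n$ with $c_n = \prod_{\ell=1}^p \frac{(nN_\ell)!}{\prod_{m=1}^{q_\ell}(nk^*_{\ell m})!}$ and $c_0 = 1$, and $\phi_{ij}(z) = \sum_{n\ge0} c_n s_n z^n$ with $s_n = H(nN_i) - H(nk^*_{ij})$; since $1\le k^*_{ij}\le N_i$, the $s_n$ are non-negative and $s_0 = 0$. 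Writing $1/\phi_0(z) = 1 - \sum_{n\ge1} b_n z^n$ and comparing coefficients in $\phi_0\cdot\phi_0^{-1} = 1$ records the recursion $c_n = \sum_{k=0}^{n-1} c_k b_{n-k}$ for $n\ge1$.

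The argument then rests on two inputs. First, $(c_n)$ is log-convex: its $\ell$-th factor $n\mapsto\comb((k^*_{\ell m}n)_{m=1}^{q_\ell})$ is a product of Fuss--Catalan-type sequences $n\mapsto\binom{nM}{nk}$, each of which is (up to normalisation) a Hausdorff moment sequence and therefore log-convex, and a product of log-convex sequences is log-convex; by Kaluza's theorem it follows that $b_n\ge0$ for all $n$. Second, $(s_n)$ is non-decreasing: from $H(\nu) = \int_0^1 \frac{1-t^\nu}{1-t}\,dt$ one computes $s_{n+1}-s_n = \theta_{N_i}(n) - \theta_{k^*_{ij}}(n)$, where $\theta_c(n) = \sum_{j=1}^c\frac{1}{cn+j} = \int_0^1 t^n(1-t)\,\frac{t^{1/c-1}}{c\,(1-t^{1/c})}\,dt$; since $v\mapsto\frac{t^{1/v-1}}{v\,(1-t^{1/v})}$ is increasing for each fixed $t\in(0,1)$ --- a calculus exercise reducing to the inequality $e^\sigma(1-\sigma)<1$ for $\sigma>0$ --- and $N_i\ge k^*_{ij}$, we conclude $s_{n+1}\ge s_n$.

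Granting these, expand $\phi_{ij}/\phi_0 = \bigl(\sum_m c_m s_m z^m\bigr)\bigl(1 - \sum_l b_l z^l\bigr)$ and substitute $c_n s_n = s_n\sum_{k=0}^{n-1} c_k b_{n-k}$ to obtain, for $n\ge1$,
$$[z^n]\,\frac{\phi_{ij}(z)}{\phi_0(z)} \;=\; \sum_{k=0}^{n-1} c_k\,b_{n-k}\,(s_n - s_k)\;\ge\;0,$$
each summand being a product of the non-negative numbers $c_k$, $b_{n-k}$ and $s_n-s_k$; the degree-$0$ coefficient is $c_0 s_0 = 0$. Hence $\phi_{ij}/\phi_0\in\Q_{\ge0}[[z]]$, and as neither input uses anything about $\Delta$, the conclusion needs no Fano hypothesis --- which is the extra generality asserted. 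The main obstacle is the first input: the monotonicity of $(s_n)$, the recursion, and Kaluza's theorem are routine, but establishing the log-convexity of the multinomial-coefficient sequences $n\mapsto\comb((k_m n)_m)$ --- equivalently, the positivity of the reciprocal coefficients $b_n$ --- in the required generality is exactly where I would invoke \cite{krattenthaler2011analytic}, where this positivity (or the equivalent statement about $1/\phi_0$) is available.
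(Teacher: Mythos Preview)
Your proof is correct and follows the same overall architecture as the paper's: reduce to a single variable using Assumption~\ref{ass}, then show that non-negativity of the quotient follows from (i) log-convexity of the coefficient sequence $c_n=\prod_\ell\comb((k^*_{\ell m}n)_m)$ and (ii) monotonicity of $s_n=H(nN_i)-H(nk^*_{ij})$. The explicit identity $[z^n](\phi_{ij}/\phi_0)=\sum_{k<n}c_k b_{n-k}(s_n-s_k)$ that you derive is precisely what underlies \cite[Lemmas~2.1 and~2.2]{krattenthaler2011analytic}, which the paper simply cites for this reduction.

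The executions of (i) and (ii) differ. For (ii), your integral representation $\theta_c(n)=\int_0^1 t^n(1-t)\frac{t^{1/c-1}}{c(1-t^{1/c})}\,dt$ and the inequality $e^\sigma(1-\sigma)<1$ give a clean self-contained proof; the paper instead invokes \cite[Corollary~8.2]{alexandersson2019cone}. For (i), the paper proves log-convexity of the full multinomial sequence directly (Lemma~\ref{lem:Phi_convex}), by applying \cite[Proposition~8.1]{alexandersson2019cone} to the convex decreasing function $\sigma(x)=\frac{n+x}{n-1+x}$; your route factors the multinomial as a telescoping product of binomials $\binom{nM}{nk}$ and appeals to their being (Stieltjes, not Hausdorff) moment sequences. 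That last fact is true but is the one step you leave to the literature---whereas the paper makes the log-convexity completely elementary and instead outsources the Kaluza step. So the two write-ups are complementary: each proves by hand what the other cites.
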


We have no proofs of Conjecture \ref{conj:1} \eqref{it:1b} in cases where $\text{rk} (K) > 1$. 
One might ask, in light of Theorem \ref{thm:conj1b}, if the Fano hypothesis in Conjecture \ref{conj:1} \eqref{it:1b} is necessary at all. 
Plugging random examples into a computer, we found several non-Fano examples such that $\phi_{ij}/\phi_0$ has positive coefficients up to high order, but we did also find some non-Fano examples with negative coefficients. 
So it seems plausible that the Fano hypothesis could be relaxed, but it can't be completely dropped.

\begin{rmk}
    By combining \cite[Corollary 5.5.4 (ii)]{CF-K} with Givental's mirror theorem \cite{Givental_toric}, one obtains an enumerative interpretation for the coefficients of the true mirror map in terms of quasimap invariants. In cases where Lemma \ref{lem:12equiv} applies, so that the true mirror map coincides with the naive mirror map, this may be helpful for verifying Conjecture \ref{conj:1} \eqref{it:1b}. We thank Ionu\c{t} Ciocan-Fontanine for this observation.
\end{rmk}

Regarding Conjecture \ref{conj:2}, which we recall is the case of interest for mirror symmetry, we of course have proofs in cases where the hypotheses of Theorem \ref{thm:conj1a} and Lemma \ref{lem:12equiv} apply. Beyond that, there is the following result of Beukers--Vlasenko:

\begin{thm}[Corollary 7.11 of \cite{BeukersVlasenkoIII}]
    \label{thm:beukersvlasenko}
    Let $\Delta \subset \R^d$ be a reflexive polytope, whose only lattice points are the origin and the vertices, and let $G \subset \mathrm{GL}(d,\Z)$ be a group which preserves $\Delta$ and acts transitively on the vertices. 
    Let $(\vec{v}_{j})$ be the vertices of $\Delta$. 
    Then $\exp((\phi_{j} + \tau_{j})/\phi_0)(t,\ldots,t) \in \Q[[t]]$ has only finitely many primes appearing in the prime factorizations of the denominators of its coefficients.
\end{thm}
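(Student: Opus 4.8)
The plan is to reduce the theorem to a $p$-adic integrality statement holding for all but finitely many primes, and then feed it into Dwork's $p$-adic Frobenius machinery; this is the argument of \cite{BeukersVlasenkoIII}, sketched here. Write $\psi(t):=\exp((\phi_j+\tau_j)/\phi_0)(t,\dots,t)\in\Q[[t]]$. Since its coefficients are rational, it suffices to show $\psi(t)\in\Z_p[[t]]$ for every prime $p$ outside some finite ``bad'' set $S$; then no prime outside $S$ can occur in a denominator. The transitivity of $G$ on the vertices forces the diagonal restrictions of all the $\psi^{\mathrm t}_j$ to coincide, so we may fix one index and set $\rho(t)=(\phi_j+\tau_j)/\phi_0$ on the diagonal, so that $\rho\in t\Q[[t]]$ and the goal becomes $\exp(\rho(t))\in\Z_p[[t]]$ for $p\notin S$.

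The first step is to realise $\phi_0$ and $\phi_j+\tau_j$ as periods of the Dwork pencil attached to $\Delta$. With $f(\vec x)=\sum_j\vec x^{\vec v_j}$, the constant term (in the variables $\vec x$) of $(1-tf(\vec x))^{-1}$, expanded as a power series in $t$, is exactly $\phi_0(t,\dots,t)$; thus $\phi_0$ is the principal period of the family $\{1-tf=0\}$. Because $\Delta$ is reflexive with no lattice points besides the origin and its vertices, this family is non-degenerate, $t=0$ is a large-complex-structure limit, and the rank-two ``mirror-map block'' of the Picard--Fuchs system at $t=0$ has a holomorphic solution $\phi_0$ and a logarithmic solution $\phi_0\log t+(\text{holomorphic})$. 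The crucial point — this is exactly where the paper's naive/true dichotomy enters — is that the holomorphic part of this logarithmic solution is $\phi_j+\tau_j$ on the diagonal, the correction $\tau_j$ being precisely what promotes the naive series $\phi_j$ to an honest solution of the Picard--Fuchs system. Hence $q(t):=t\exp(\rho(t))$ is the canonical (flat) coordinate of the family.

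The second step is Dwork's $p$-adic Frobenius structure, in the Dwork-crystal form of \cite{BeukersVlasenkoIII}. Take $S$ to be the finite, explicit set of primes dividing the ``discriminant'' of $1-tf$ — roughly those for which the reduction of the family modulo $p$ fails to be non-degenerate, equivalently those for which the relevant Hasse--Witt-type series is not a $p$-adic unit; one also throws $2$ into $S$. For $p\notin S$, the relative cohomology of the family carries a Frobenius $\Phi$, semilinear over $t\mapsto t^p$, preserving the rank-two mirror-map block (with periods $\phi_0$ and $\phi_0\log t+(\phi_j+\tau_j)$), on which $\Phi$ is integral — its matrix lies in $\mathrm{GL}_2$ over a localisation of $\Z_p[[t]]$ — with a unit-root line corresponding to $\phi_0$ and a complementary line on which Frobenius has slope one. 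Tracing the action of $\Phi$ on periods, the slope-one direction contributes a factor of $p$ and one obtains Dwork's congruence $q(t^p)/q(t)^p\in 1+p\,t\,\Z_p[[t]]$. By the Dieudonn\'e--Dwork criterion applied to $g(t)=q(t)/t=\exp(\rho(t))$, this congruence is equivalent to $\exp(\rho(t))\in\Z_p[[t]]$. Since $S$ is finite, the theorem follows.

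I expect the main obstacle to be the second step: constructing the $p$-adic Frobenius with the stated integrality and slope structure for all $p\notin S$, and pinning down $S$ explicitly — this is exactly what the Dwork-crystal formalism of \cite{BeukersVlasenkoIII} is built to deliver. A close second is the identification in the first step of $\phi_j+\tau_j$ with the holomorphic part of the logarithmic period — equivalently, the verification that $\tau_j$ is precisely the Picard--Fuchs correction to $\phi_j$ — since it is this fact, rather than anything about the naive $\phi_j$ alone, that allows the Dwork congruences to reach the \emph{true} mirror map.
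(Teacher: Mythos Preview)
The paper does not prove this theorem at all: it is quoted verbatim as Corollary~7.11 of \cite{BeukersVlasenkoIII} and no argument is given. So there is nothing in the paper to compare your proposal against.

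That said, your sketch is a faithful outline of the Beukers--Vlasenko approach: identify $\phi_0$ and $\phi_j+\tau_j$ on the diagonal with the holomorphic and log-holomorphic periods of the Dwork family $\{1-tf=0\}$ (the transitivity of $G$ collapsing all $j$ to a single function), invoke the $p$-adic Frobenius structure on the Dwork crystal to obtain a congruence $q(t^p)/q(t)^p\in 1+pt\,\Z_p[[t]]$ for $p$ outside a finite bad set, and conclude $p$-adic integrality of $\exp(\rho)$ via the Dieudonn\'e--Dwork criterion. The two places you flag as the hard steps --- the identification of $\tau_j$ as the Picard--Fuchs correction, and the construction of the integral Frobenius with the right slope filtration --- are indeed where the content of \cite{BeukersVlasenkoIII} lies; your proposal correctly defers both to that reference rather than attempting them from scratch.
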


We also have the following result, proved via an arithmetic refinement of homological mirror symmetry:\footnote{We remark that the result was stated as being contingent on certain foundational results, which have since been established, so that the theorem is now proved unconditionally. Namely, \cite[Conjecture 1.14]{ganatra2015} has been verified in \cite{Tu2024}, and the conditions on the relative Fukaya category enumerated in \cite[Section 4]{ganatra2015} have been verified in \cite{relfukii,cyclicoc}.}

\begin{thm}[Theorem B of \cite{ganatra2024integrality}]\label{thm:hms}
Let $\Delta \subset \R^d$ be a reflexive simplex, and let $(\vec{v}_{j})$ be the lattice points lying on facets of $\Delta$ of codimension $2$. Suppose that $\Delta$ admits a vector satisfying the MPCS condition [\emph{op. cit.}, Definition 1.7]. Then for any $\vec{k} \in K$, the power series
$$ \prod_j \exp\left(\frac{\phi_j+\tau_j}{\phi_0}\right)^{k_j}$$
has integer coefficients.
\end{thm}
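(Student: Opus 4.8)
The plan is to deduce \Cref{thm:hms} from homological mirror symmetry, carried out over $\Z$, following \cite[Theorem~B]{ganatra2024integrality}. First I would set up the two sides integrally. From the reflexive simplex $\Delta$ one forms the Fano toric variety $X_\Delta$, a smooth anticanonical Calabi--Yau hypersurface $X\subset X_\Delta$, and the anticanonical divisor $D\subset X$ cut out by the toric boundary; the relative Fukaya category $\cF(X,D)$ is then an $\ainf$-category linear over the \emph{integral} Novikov ring $R=\Z\power{Q_1,\dots,Q_N}$ (the $Q_a$ being the K\"ahler parameters), whose structure constants count pseudoholomorphic discs weighted by their intersection numbers with the components of $D$. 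The MPCS condition on $\Delta$ supplies the toric degeneration of $X$ underlying this construction, and the assumed properties of the relative Fukaya category (the conditions of \cite[Section~4]{ganatra2015}, together with \cite[Conjecture~1.14]{ganatra2015}, verified in \cite{Tu2024}) ensure that $\cF(X,D)$ is well defined over $R$ with integral structure constants and specialises at $Q_a=0$ to the split-closure of the exact Fukaya category $\cF(X\setminus D)$ of the affine Calabi--Yau $X\setminus D$. On the mirror side, the Batyrev dual of $\Delta$ yields a family $\Xmir_q\to\spec\,\Z\power{q_1,\dots,q_N}$ whose large complex structure limit fibre $\Xmir_0$ is a reduced normal crossings union of toric strata; I would establish homological mirror symmetry at the limit in the form $D^b\mathrm{Coh}(\Xmir_0)\simeq\cF(X\setminus D)$ as $\Z$-linear categories --- a version of toric HMS that can be arranged over $\Z$ --- so that $\Xmir_q$ gives rise to a $\Z\power{q_1,\dots,q_N}$-linear category deforming $\cF(X\setminus D)$.

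The engine of the argument is the versality theorem of \cite{ganatra2015}: via the closed--open map, the first-order deformation class of $\cF(X,D)$ in the $Q_a$-direction is the class of the corresponding toric boundary divisor, and these classes span the relevant part of $\HH^2(\cF(X\setminus D))$, so that $\cF(X,D)/R$ is a versal deformation of $\cF(X\setminus D)$. Since $\cF(X,D)$ is defined over $\Z$, this versality holds in the category of complete local $\Z$-algebras: any deformation of $\cF(X\setminus D)$ over such an algebra $B$ is, up to gauge equivalence, pulled back along a $\Z$-algebra homomorphism $R\to B$. Applied to $B=\Z\power{q_1,\dots,q_N}$ and to the B-side family, this produces a $\Z$-algebra homomorphism $\mu\colon R\to\Z\power{q_1,\dots,q_N}$; in particular each $\mu(Q_a)\in\Z\power{q_1,\dots,q_N}$.

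It then remains to identify $\mu$ with the mirror map. On the A-side there is no hidden reparametrisation, since $\cF(X,D)$ is linear over $R$ by construction, so the $Q_a$ are already the canonical coordinates; on the B-side the classifying map of the Batyrev family, read off from its Gauss--Manin/Kodaira--Spencer data --- equivalently, from its period integrals --- is exactly the hypergeometric expression assembled from $\phi_0,\phi_{ij},\tau_{ij}$, by the classical B-model computation of the mirror map (\cite[Section~6.3.4]{cox1999mirror}, and \cite[Appendix~C]{ganatra2024integrality} for the normalisation used here). Combining these, for every $\vec{k}\in K$ the series $\prod_j\exp((\phi_j+\tau_j)/\phi_0)^{k_j}$ is the image under $\mu$ of the monomial $\prod_a Q_a^{m_a}$, where $\sum_a m_a\vec{k}^{(a)}=\vec{k}$ for a fixed basis $\vec{k}^{(a)}$ of $K$; hence it lies in $\Z\power{q_1,\dots,q_N}$ and in particular has integer coefficients. (These products make sense in the relevant power series ring by \Cref{lem:cone_conv}.)

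I expect the main obstacle to be not the deformation-theoretic bookkeeping but the two ``matching'' steps: checking that the closed--open map sends the $Q_a$-direction precisely to the toric boundary classes --- so that the A-side coordinates really are the canonical ones --- and that the Kodaira--Spencer map of the Batyrev family unfolds exactly into the series $\phi_0,\phi_{ij},\tau_{ij}$. The former is where the assumed foundational results on counts of pseudoholomorphic discs are needed, which is precisely why \Cref{thm:hms} must be stated conditionally.
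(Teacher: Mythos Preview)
The paper does not contain a proof of this theorem: it is quoted verbatim as Theorem~B of \cite{ganatra2024integrality}, with no argument given here. So there is no ``paper's own proof'' to compare your proposal against.

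That said, your sketch is a faithful high-level summary of the strategy of \cite{ganatra2024integrality}: establish integral HMS at the large complex structure limit, use the versality theorem of \cite{ganatra2015} to classify the B-side family by a $\Z$-algebra map $\mu:R\to\Z\power{q}$, and then identify $\mu$ with the hypergeometric mirror map via periods. As a reading of the cited result this is fine; just be aware that in the present paper it functions as a black-box citation rather than something to be reproved, and that the technical content you flag in your last paragraph (the precise matching of closed--open classes with divisor classes, and of the B-model classifying map with the explicit series $\phi_0,\phi_j,\tau_j$) is exactly what occupies the bulk of \cite{ganatra2024integrality} and is not reproduced here.
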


During the copy-editing process for this paper, we learned of the PhD thesis of Jorin Schug \cite{Schug}, which proves the analogue of Theorem \ref{thm:hms} for a broad class of examples relating to mirror symmetry for Calabi--Yau complete intersections in toric varieties, following a different approach proposed by Jockers--Mayr \cite{JockersMayr}.

\begin{question}
    Does the naive mirror map have any geometric significance, and can its integrality (perhaps even its positivity) be explained geometrically, as is the case for the true mirror map in Theorem \ref{thm:hms}? Does it define a mirror map relating the Yukawa coupling to some alternative curve-counting invariants (such as relative Gromov--Witten invariants \cite{LiRuan,Li,IonelParker})? For example, let us consider the case that the $(\vec{v}_j)$ are lattice points on some reflexive polytope $\Delta$. 
    Then the mirror map associated to $(\vec{v}_j)$ makes its appearance in mirror symmetry for a Calabi--Yau hypersurface in a toric variety whose fan has rays pointing along the vectors $\vec{v}_j$. 
    In particular, the vectors $\vec{v}_j$ correspond to divisors $D_j$ in this toric variety. 
    If $D_j$ is ample, then one can show that $\tau_j = 0$ by Lemma \ref{lem:12equiv}. 
    This leads one to wonder if $\exp(\tau_{j}/\phi_0)$ might be a generating function for some kind of curves living inside $D_j$, and these counts vanish when $D_j$ is ample because the dimension of the moduli space of curves inside $D_j$ is lower than that of curves in the ambient space by adjunction.
\end{question}

\textbf{Acknowledgements:} This project grew out of the first author's M.Math. dissertation, supervised by the second author. The project was to survey what was known and what was conjectured about the `integrality of mirror maps' phenomenon, and check the conjectures on a computer. However, at the start of the project N.S. mistakenly omitted the term $\tau_{ij}$ in the formula for the true mirror map, and so S.B. set about checking integrality of the coefficients of the naive mirror map. At the point when Masha Vlasenko pointed out the mistake, S.B. had already checked that the naive mirror map had positive integer coefficients in thousands of examples. This led us to the distinct Conjectures \ref{conj:1} and \ref{conj:2}. We are very grateful to Vlasenko for pointing this out, and for other helpful conversations and encouragement. The authors are also grateful to Ionu\c{t} Ciocan-Fontanine for bringing the references \cite{CF-K,Schug} to our attention, as well as other helpful remarks. N.S. is also grateful to his co-authors on the paper \cite{ganatra2024integrality}, which inspired this project; especially to Dan Pomerleano for a discussion related to the question of a geometric interpretation for the naive mirror map. 
The authors are grateful to the referees for numerous helpful suggestions and corrections.

S.B. was supported by the Royal Society through N.S.'s University Research Fellowship. N.S. was supported by ERC Starting Grant (award number 850713 -- HMS), a Royal Society University Research Fellowship, the Leverhulme Prize, and a Simons Investigator award (award number 929034).

\section{Translating between our setup and Delaygue's}\label{sec:translation}

\subsection{Delaygue's setup}

We start by recalling Delaygue's setup \cite{delaygue2013criterion}. 
Suppose we are given vectors $(\vec{e}_i)_{i=1}^p$ and $(\vec{f}_k)_{k=1}^s$ in $\NN^r$, satisfying
\begin{equation}
    \label{eq:sum e sum f}
    \sum_{i=1}^p \vec{e}_i = \sum_{k=1}^s \vec{f}_k.
\end{equation}
We will consider the special case of Delaygue's setup in which $s = |I|$, with $I$ as in \eqref{eq:I}, and
\begin{equation}
    \label{eq:e from f}
    \vec{e}_i = \sum_{j=1}^{q_i} \vec{f}_{ij},
\end{equation}
which clearly implies \eqref{eq:sum e sum f}. (We implicitly choose an ordering of $I$, so that we can relabel the $\vec{f}_{ij}$ as $\vec{f}_k$.)

Delaygue defines 
$$F_{e,f}(\vec{w}) = \sum_{\vec{n} \in \NN^r} \prod_{i=1}^p \comb(\vec{f}_{i1}\cdot \vec{n},\ldots,\vec{f}_{iq_i} \cdot \vec{n}) \cdot \vec{w}^{\vec{n}} $$
and 
$$G_{\vec{L},e,f}(\vec{w}) = \sum_{\vec{n} \in \NN^r} \prod_{i=1}^p \comb(\vec{f}_{i1}\cdot \vec{n},\ldots,\vec{f}_{iq_i} \cdot \vec{n}) \cdot H_{\vec{L} \cdot \vec{n}} \cdot \vec{w}^{\vec{n}}$$
for $\vec{L} \in \NN^r$, which are both power series in $\Q[[w_\ell]]_{\ell=1}^r$. 
His results concern integrality of the power series 
$$q_{\vec{L},e,f} = \exp(G_{\vec{L},e,f}/F_{e,f}),$$ 
for different values of $\vec{L}$.

\subsection{Translation from our setup}

Suppose that we have an isomorphism of monoids $F: \NN^r \xrightarrow{\sim} K_0$. 
Let $(\vec{f}_{ij})_{(i,j) \in I}$ be the row vectors of the matrix of $F$; so they are vectors in $\NN^r$ such that
\begin{align*}
    F(\vec{n}) &= (\vec{f}_{ij} \cdot \vec{n})_{(i,j) \in I}.
\end{align*}
We define $\vec{e}_i$ by \eqref{eq:e from f}.

We now have an isomorphism
\begin{align*}
    \iota: \Q[[w_\ell]]_{\ell=1}^r & \xrightarrow{\sim} \Q[[\vec{z}]]_{K_0}\\
    \text{sending }\qquad \vec{w}^{\vec{n}} & \mapsto \vec{z}^{F(\vec{n})}.
\end{align*}

The translation from our setup to Delaygue's is given by the following Lemma, whose proof is immediate from the definitions:

\begin{lem}\label{lem:translation}
We have
\begin{align*}
\nonumber    \phi_0 &= \iota(F_{e,f})\quad \text{and}\\
\nonumber    \phi_{ij} &= \iota(G_{\vec{e}_i,e,f} - G_{\vec{f}_{ij},e,f}).
\end{align*}
In particular, we have
\begin{align}
    \label{eq:phi_q}    \exp\left(\frac{\phi_{ij}}{\phi_0}\right) &= \iota\left(\frac{q_{\vec{e}_i,e,f}}{q_{\vec{f}_{ij},e,f}}\right).
\end{align}    
\end{lem}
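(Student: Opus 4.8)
The plan is to verify the two displayed identities for $\phi_0$ and $\phi_{ij}$ directly by unwinding the definitions, using the monoid isomorphism $F\colon \NN^r \to K_0$ to reindex the defining sums, and then to deduce \eqref{eq:phi_q} formally from them. No serious obstacle is expected — as the paper says, the proof is essentially bookkeeping — but I will be careful about one point (flagged at the end).

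First I would compare $\phi_0$ with $\iota(F_{e,f})$. By definition $\iota(F_{e,f}) = \sum_{\vec n \in \NN^r} \prod_{i=1}^p \comb(\vec f_{i1}\cdot\vec n,\ldots,\vec f_{iq_i}\cdot\vec n)\,\vec z^{F(\vec n)}$. Since $F$ is a bijection onto $K_0$, I substitute $\vec k = F(\vec n)$; the exponent becomes $\vec z^{\vec k}$, and reading off the $(i,j)$-component of $F(\vec n)$ gives $k_{ij} = \vec f_{ij}\cdot\vec n$. The product of multinomial coefficients then matches $\prod_{i=1}^p \comb(k_{ij})_{j=1}^{q_i}$ term by term, yielding $\iota(F_{e,f}) = \phi_0$.

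Next, for $\phi_{ij}$ I apply the same reindexing and additionally use \eqref{eq:e from f}: under $\vec k = F(\vec n)$ we get $\sum_{m=1}^{q_i} k_{im} = \big(\sum_{m=1}^{q_i}\vec f_{im}\big)\cdot\vec n = \vec e_i\cdot\vec n$ and $k_{ij} = \vec f_{ij}\cdot\vec n$, so the extra factor $H(\sum_m k_{im}) - H(k_{ij})$ becomes $H_{\vec e_i\cdot\vec n} - H_{\vec f_{ij}\cdot\vec n}$. Splitting the sum into the two pieces corresponding to these two terms and comparing with Delaygue's definition of $G_{\vec L,e,f}$ identifies them as $\iota(G_{\vec e_i,e,f})$ and $\iota(G_{\vec f_{ij},e,f})$ respectively, whence $\phi_{ij} = \iota(G_{\vec e_i,e,f} - G_{\vec f_{ij},e,f})$.

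Finally, \eqref{eq:phi_q} follows formally: since $G_{\vec L,e,f}$ has vanishing constant term (as $H_0 = 0$) and $F_{e,f}$ has constant term $1$, we have $q_{\vec e_i,e,f}/q_{\vec f_{ij},e,f} = \exp\big((G_{\vec e_i,e,f} - G_{\vec f_{ij},e,f})/F_{e,f}\big)$; applying the ring isomorphism $\iota$ and the two identities just proved gives $\iota(q_{\vec e_i,e,f}/q_{\vec f_{ij},e,f}) = \exp(\phi_{ij}/\phi_0)$. The one point that needs a moment's care — and is really the only ``obstacle'' — is checking that $\iota$ is a genuinely continuous isomorphism of complete local rings sending the maximal ideal of $\Q[[w_\ell]]_{\ell=1}^r$ to that of $\Q[[K_0]]$, so that $\exp$, inversion of a unit, and passage to infinite sums are all respected; tracking this also requires keeping the implicit ordering of $I$ (used to relabel the $\vec f_{ij}$ as $\vec f_k$) consistent throughout.
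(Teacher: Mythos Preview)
Your proposal is correct and is precisely the direct unwinding of definitions that the paper has in mind when it says the proof ``is immediate from the definitions''; the paper gives no further argument. Your extra care about $\iota$ being a continuous local ring isomorphism (so that $\exp$ and quotients are preserved) is the only genuine content, and it is straightforward since $F$ is a monoid isomorphism $\NN^r \xrightarrow{\sim} K_0$ carrying the augmentation ideal to the augmentation ideal.
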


\section{Proof of Theorem \ref{thm:conj1a}}\label{sec:Conj1a proof}

\subsection{Delaygue's criterion}

We now introduce Delaygue's criterion for integrality of mirror maps. It involves the function
\begin{align*}
    \Delta_{e,f}: \R^r & \to \Z,\\
    \Delta_{e,f}(\vec{x}) &\coloneqq \sum_{i=1}^p \lfloor \vec{e}_i \cdot \vec{x} \rfloor - \sum_{i=1}^p \sum_{j=1}^{q_i} \lfloor \vec{f}_{ij} \cdot \vec{x} \rfloor \\
    &= \sum_{i=1}^p \left\lfloor \sum_{j=1}^{q_i} \{\vec{f}_{ij} \cdot \vec{x}\} \right\rfloor,
\end{align*}
where $\lfloor \cdot \rfloor$ denotes the integer part, and $\{ \cdot \}$ the fractional part.

The special case of Delaygue's theorem of interest to us is:

\begin{thm}[Theorem 1.2 of \cite{delaygue2013criterion}]
    If $\Delta_{e,f}(\vec{x}) \ge 1$ for all $\vec{x} \in [0,1)^r$ such that $\vec{e}_i \cdot \vec{x} \ge 1$ for some $i$, then $q_{\vec{e}_i,e,f}$ and $q_{\vec{f}_{ij},e,f}$ have integer coefficients. As the leading coefficient of both is $1$, this implies that their quotient \eqref{eq:phi_q} has integer coefficients; so Conjecture \ref{conj:1} \eqref{it:1a} holds.
\end{thm}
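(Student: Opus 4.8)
The substantive thing to prove is the last sentence, i.e.\ that Conjecture \ref{conj:1}\,\eqref{it:1a} holds under the standing hypothesis $K_0 \cong \NN^r$ of Theorem \ref{thm:conj1a} together with the Fano hypothesis on $(\vec{v}_{ij})$; Delaygue's theorem (the result just stated) is a black box, and by \eqref{eq:phi_q} we have $\psi_{ij}^{\text{n}} = \iota(q_{\vec{e}_i,e,f}/q_{\vec{f}_{ij},e,f})$, so it is enough to verify that \emph{Delaygue's criterion} ($\Delta_{e,f}(\vec{x}) \ge 1$ whenever $\vec{x}\in[0,1)^r$ and $\vec{e}_i\cdot\vec{x}\ge 1$ for some $i$) holds. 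In fact the plan is to prove the sharper statement that, assuming $K_0\cong\NN^r$, Delaygue's criterion holds \emph{if and only if} $(\vec{v}_{ij})$ is Fano --- the ``non-obvious equivalence'' advertised in the introduction --- of which only the ``if'' direction is needed. As a reformulation: since $\Delta_{e,f}(\vec{x}) = \sum_i\big\lfloor\sum_j\{\vec{f}_{ij}\cdot\vec{x}\}\big\rfloor\ge 0$, the criterion \emph{fails} precisely when some $\vec{x}\in[0,1)^r$ satisfies $\sum_j\{\vec{f}_{ij}\cdot\vec{x}\}<1$ for every $i$ while $\vec{f}_{ij}\cdot\vec{x}\ge 1$ for some $(i,j)$ (in the presence of the first condition, this is equivalent to $\vec{e}_i\cdot\vec{x}\ge 1$ for some $i$).

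\textbf{``If'' direction.} Let $\vec{c}_1,\dots,\vec{c}_r\in\NN^I$ be the generators of $K_0$ furnished by the isomorphism $K_0\cong\NN^r$; these are a $\Z$-basis of $K$ and the rays of the simplicial unimodular cone $\sigma = \{\vec{u}\in K\otimes\R : u_{ij}\ge 0\}$, and the $(i,j)$-coordinate of $F(\vec{n})$ is $\vec{f}_{ij}\cdot\vec{n}$. Given $\vec{x}$ as above, I would set $u_{ij} := \vec{f}_{ij}\cdot\vec{x}\ge 0$, so $(u_{ij})_{ij} = F(\vec{x})\in K\otimes\R$ and $\sum_{ij}u_{ij}\vec{v}_{ij} = \vec{0}$, and split $u_{ij}=n_{ij}+w_{ij}$ into integer and fractional parts (noting $n_{ij}\ge 1$ for at least one pair, by the choice of $\vec{x}$). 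Put $\vec{m}:=\sum_{ij}n_{ij}\vec{v}_{ij}\in\Z^d$, so $-\vec{m}=\sum_{ij}w_{ij}\vec{v}_{ij}$; since $w_{ij}\ge 0$ and $\sum_j w_{ij}<1$, each $\sum_j w_{ij}\vec{v}_{ij}$ is a convex combination of $\vec{0}$ and the $\vec{v}_{ij}$, hence lies in $\Delta_i$, so $-\vec{m}$ is a lattice point of $\Delta=\sum_i\Delta_i$. The crux is that it lies in the \emph{interior}: writing $h_i(\vec{y}) := \max(0,\langle\vec{y},\vec{v}_{i1}\rangle,\dots,\langle\vec{y},\vec{v}_{iq_i}\rangle) = h_{\Delta_i}(\vec{y})$, for every $\vec{y}\ne\vec{0}$
$$\langle\vec{y},-\vec{m}\rangle = \sum_{i,j} w_{ij}\langle\vec{y},\vec{v}_{ij}\rangle \le \sum_i\Big(\sum_j w_{ij}\Big)h_i(\vec{y}) < \sum_i h_i(\vec{y}) = h_\Delta(\vec{y}),$$
the strict inequality because $\sum_j w_{ij}<1$ and because $\vec{0}\in\operatorname{int}\Delta$ (Assumption \ref{ass}) forces $h_\Delta(\vec{y})>0$, hence $h_i(\vec{y})>0$ for some $i$. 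Finally $\vec{m}\ne\vec{0}$: otherwise $(n_{ij})\in K_0$, say $(n_{ij})=\sum_\ell n'_\ell\vec{c}_\ell$ with $n'_\ell\in\NN$, and then $(w_{ij})=F(\vec{x})-(n_{ij})=\sum_\ell(x_\ell-n'_\ell)\vec{c}_\ell\in\sigma$, so simpliciality gives $x_\ell-n'_\ell\ge 0$, whence $n'_\ell\le x_\ell<1$ and $(n_{ij})=\vec{0}$, contradicting $n_{ij}\ge 1$ somewhere. So $-\vec{m}$ is a nonzero interior lattice point of $\Delta$, contradicting Fano; hence the criterion holds, and Delaygue's theorem gives $\psi_{ij}^{\text{n}}\in\Z[[K_0]]$.

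\textbf{Converse} (not needed for Theorem \ref{thm:conj1a} but clarifying the equivalence). If $\Delta$ has a nonzero interior lattice point $\vec{q}$, then $(1+s)\vec{q}\in\Delta$ for small $s>0$, and rescaling a representation of $(1+s)\vec{q}$ as an element of $\sum_i\Delta_i$ yields $\vec{q}=\sum_{ij}\mu_{ij}\vec{v}_{ij}$ with $\mu_{ij}\ge 0$ and $\sum_j\mu_{ij}<1$ for all $i$. Assumption \ref{ass} supplies a vector in $K\cap\Z_{>0}^I$, and together with the spanning hypothesis this shows every element of $\Z^d$, in particular $-\vec{q}$, is a \emph{nonnegative} integer combination $\sum_{ij}n_{ij}\vec{v}_{ij}$. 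Then $\vec{u}:=(\mu_{ij}+n_{ij})_{ij}\ge\vec{0}$ lies in $K\otimes\R$; reducing modulo the lattice $K=\bigoplus_\ell\Z\vec{c}_\ell$ into the half-open parallelepiped on the $\vec{c}_\ell$ produces $\vec{u}'$ with $\vec{x}:=F^{-1}(\vec{u}')\in[0,1)^r$. Since $\vec{u}'\ge\vec{0}$ and $\vec{u}'\equiv\vec{u}\pmod{\Z^I}$ coordinatewise, $\{\vec{f}_{ij}\cdot\vec{x}\}=\mu_{ij}$, so $\sum_j\{\vec{f}_{ij}\cdot\vec{x}\}<1$ for all $i$; while $\sum_{ij}\lfloor\vec{f}_{ij}\cdot\vec{x}\rfloor\,\vec{v}_{ij} = -\vec{q}\ne\vec{0}$ forces $\lfloor\vec{f}_{ij}\cdot\vec{x}\rfloor\ge 1$ somewhere, hence $\vec{e}_i\cdot\vec{x}\ge 1$. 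So $\vec{x}$ violates the criterion.

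\textbf{Main obstacle.} I expect the heart of the argument --- and the place demanding the most care --- to be the dictionary itself: seeing that the integer/fractional splitting of $F(\vec{x})$ reads off the lattice point $-\vec{m}\in\Delta$, and that $\Delta_{e,f}(\vec{x})=0$ encodes exactly that $-\vec{m}$ lies in the interior. Granting that, the two remaining delicate points are the support-function estimate for interiority and the extraction of $\vec{m}\ne\vec{0}$ (and, dually, the reduction modulo $K$): these are precisely where the hypothesis $K_0\cong\NN^r$ (simpliciality and unimodularity of $\sigma$) is genuinely used, and where the bookkeeping is most likely to trip one up.
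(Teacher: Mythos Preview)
Your proposal is correct and follows essentially the same route as the paper. The theorem as stated is just Delaygue's result (taken as a black box) together with the immediate observation about quotients via \eqref{eq:phi_q}; you rightly observe that the genuine content is the equivalence ``Fano $\Leftrightarrow$ Delaygue's criterion'' (the paper's Proposition \ref{prop:eq_crit}), and your argument for it coincides with the paper's: the paper introduces the sets $\mathcal{X}=\{x\in[0,1)^r:\vec{e}_i\cdot\vec{x}<1\ \forall i\}$ and $\mathcal{Y}=\{\vec{y}\in[0,1)^I:\sum_j y_{ij}<1\ \forall i\}$, shows (Lemma \ref{lem:latt_points}) that interior lattice points of $\Delta$ are exactly the $\V(\{F\}(\vec{x}))$ with $\{F\}(\vec{x})\in\mathcal{Y}$, and (Lemma \ref{lem:0 latt point}) that this lattice point is $\vec{0}$ iff $\vec{x}\in\mathcal{X}$ --- precisely your integer/fractional splitting of $F(\vec{x})$, your $\sum_j w_{ij}<1$ condition, and your ``$\vec{m}=\vec{0}\Rightarrow(n_{ij})=0$'' step. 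The only cosmetic differences are that the paper packages the two directions into a single bijective dictionary, and proves interiority by a small perturbation $\vec{y}\mapsto\vec{y}+F(\vec{d})$ rather than your support-function estimate; both are fine.
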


Thus, Theorem \ref{thm:conj1a} follows from:

\begin{prop}\label{prop:eq_crit}
    In the setting of Section \ref{sec:translation}, $(\vec{v}_{ij})$ is Fano if and only if we have $\Delta_{e,f}(\vec{x}) \ge 1$ for all $\vec{x} \in [0,1)^r$ such that $\vec{e}_i \cdot \vec{x} \ge 1$ for some $i$.
\end{prop}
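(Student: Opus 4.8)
The plan is to relate Delaygue's inequality $\Delta_{e,f}(\vec x) \ge 1$ to the position of the origin relative to the polytope $\Delta$ by means of an explicit dictionary between the points $\vec x \in [0,1)^r$ and certain linear functionals on $\Z^d$. The first step is to unwind the definition of $\Delta_{e,f}$ using the second displayed formula,
$$\Delta_{e,f}(\vec x) = \sum_{i=1}^p \left\lfloor \sum_{j=1}^{q_i} \{\vec f_{ij}\cdot \vec x\}\right\rfloor,$$
so that $\Delta_{e,f}(\vec x) \ge 1$ precisely when, for at least one $i$, we have $\sum_{j=1}^{q_i}\{\vec f_{ij}\cdot\vec x\} \ge 1$. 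The condition $\vec e_i \cdot \vec x \ge 1$ for some $i$ just says that $\vec x$ is not so small as to make all fractional parts equal to the values themselves with total $<1$; the key point is that both hypotheses and conclusion of the inequality are naturally phrased in terms of the quantities $\{\vec f_{ij}\cdot\vec x\}$.

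The second and central step is to set up the dictionary. Given $\vec x \in [0,1)^r$, I would like to produce a linear functional $\ell_{\vec x}$ on $\Z^d$ (equivalently, a point of $\R^d$ paired against the $\vec v_{ij}$) such that $\ell_{\vec x}(\vec v_{ij}) \equiv \vec f_{ij}\cdot \vec x \pmod{\Z}$, or better, so that one can read off $\{\vec f_{ij}\cdot \vec x\}$ and $\lfloor \vec f_{ij}\cdot\vec x\rfloor$ in terms of $\ell_{\vec x}$. This is where the hypothesis $K_0 \cong \NN^r$ of \Cref{thm:conj1a} is essential: the isomorphism $F:\NN^r \xrightarrow{\sim} K_0$ means the $\vec f_{ij}$ are the rows of an integer matrix whose image is exactly $K_0$, and dualizing this identifies $\R^r$ (modulo an appropriate lattice) with the space of functionals on $\Z^I$ that restrict suitably to $K$; composing with $\V^\vee$ relates these to functionals on $\Z^d$. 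I would argue that the dual statement of $\operatorname{im} F = K_0 = K\cap \NN^I$ is exactly that a collection of real numbers $(a_{ij})$ arises as $(\vec f_{ij}\cdot\vec x)$ for some $\vec x\in\R^r$ iff the corresponding relation holds on $K$, and the fractional parts $(\{a_{ij}\})$ then correspond to evaluation of a functional $\ell$ on the $\vec v_{ij}$ up to the lattice dual to $\mathrm{span}(\vec v_{ij}) = \Z^d$.

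The third step is to translate the two sides of Delaygue's criterion. On one side, ``$\vec 0$ lies in the interior of $\Delta = \sum \Delta_i$, and is the unique interior lattice point'' (the Fano condition of Assumption \ref{ass} plus uniqueness) should become: for every functional $\ell$ in the relevant torus (i.e.\ every $\vec x\in[0,1)^r$) that is ``large enough'', the sum $\sum_i \max(0, \text{contributions})$ detects that $\vec 0$ is not forced to be the only interior point — more precisely, $\Delta_{e,f}(\vec x)$ counts, for each $i$, whether the functional $\ell_{\vec x}$ shifted appropriately separates $\vec 0$ from the relevant face of $\Delta_i$, and $\Delta_{e,f}(\vec x)\ge 1$ for all admissible $\vec x$ is equivalent to: no lattice point other than $\vec 0$ is interior to $\Delta$. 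I would prove both directions by contraposition: a lattice point $\vec w \neq \vec 0$ interior to $\Delta$ gives, via the support functions of the $\Delta_i$, a specific $\vec x$ with $\Delta_{e,f}(\vec x) = 0$ and $\vec e_i\cdot\vec x\ge 1$ for some $i$; conversely such an $\vec x$ reconstructs an interior lattice point. The interior-point condition unwinds as $\vec w = \sum_i \vec w_i$ with $\vec w_i$ in the relative interior of $\Delta_i$, and the floor/fractional-part bookkeeping exactly encodes the inequalities $\langle \vec w, \cdot\rangle$ defining these interiors.

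I expect the main obstacle to be the dictionary in step two: making precise the claim that, under the hypothesis $K_0\cong\NN^r$, real tuples $(\vec f_{ij}\cdot\vec x)_{ij}$ modulo $\Z^I$ are in bijection with functionals on $\Z^d$ modulo the dual lattice, in a way that is compatible with taking fractional parts and with the Minkowski-sum structure of $\Delta$. The subtlety is that the $\vec v_{ij}$ need not be the vertices of $\Delta$, may repeat, and the $\Delta_i$ interact only through their Minkowski sum; keeping track of which $\vec v_{ij}$ contributes to which summand, and ensuring the correspondence between $\lfloor\vec f_{ij}\cdot\vec x\rfloor$-patterns and lattice points is genuinely bijective (not merely surjective one way), will require care. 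Once that correspondence is nailed down, both implications of \Cref{prop:eq_crit} should follow by the contrapositive bookkeeping sketched above.
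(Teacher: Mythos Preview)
Your proposal has a genuine conceptual gap in step two, and it is exactly the one you flag: the dictionary you sketch goes in the wrong direction. You try to attach to $\vec x\in[0,1)^r$ a \emph{functional} $\ell_{\vec x}$ on $\Z^d$ with $\ell_{\vec x}(\vec v_{ij})\equiv \vec f_{ij}\cdot\vec x\pmod{\Z}$. But $(\vec f_{ij}\cdot\vec x)_{ij}=F(\vec x)$ lies in $K\otimes\R=\ker(\V)\otimes\R$, whereas any vector of the form $(\ell(\vec v_{ij}))_{ij}=\V^*(\ell)$ lies in $\operatorname{im}(\V^*)$, which is the annihilator of $K$. These two subspaces of $\R^I$ meet only at $0$, so for generic $\vec x$ no such $\ell$ exists, even modulo $\Z^I$. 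The dual picture you are reaching for simply does not produce the object you need.

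The paper's argument uses points rather than functionals, and is much more direct than what you outline. Given $\vec x\in[0,1)^r$, set $\vec y=\{F(\vec x)\}\in[0,1)^I$ (componentwise fractional part). Since $\V\circ F=0$, one has $\V(\vec y)=-\V(\lfloor F(\vec x)\rfloor)\in\Z^d$: this lattice point is the object to attach to $\vec x$. Defining
\[
\mathcal Y=\Bigl\{\vec y\in[0,1)^I:\textstyle\sum_{j}y_{ij}<1\ \forall i\Bigr\},\qquad
\mathcal X=\{\vec x\in[0,1)^r:\vec e_i\cdot\vec x<1\ \forall i\},
\]
one checks (this is the heart of the proof) that the interior lattice points of $\Delta$ are exactly the points $\V(\{F(\vec x)\})$ for those $\vec x\in[0,1)^r$ with $\{F(\vec x)\}\in\mathcal Y$; and among these, the origin $\vec 0$ arises precisely when $\vec x\in\mathcal X$. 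Delaygue's criterion rewrites as $\{F\}^{-1}(\mathcal Y)\cap[0,1)^r\subseteq\mathcal X$, so it is equivalent to $\vec 0$ being the unique interior lattice point. No support functions, no Minkowski decomposition $\vec w=\sum_i\vec w_i$, and no duality are needed: the set $\mathcal Y$ already encodes the Minkowski-sum interior, and the exact sequence $0\to\Z^r\xrightarrow{F}\Z^I\xrightarrow{\V}\Z^d\to 0$ does all the bookkeeping.
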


\subsection{Proof of equivalence of the criteria}\label{sec:equivalence of Fano and Delaygue}

By abuse of notation, we will also denote by
\begin{align*}
    F: \R^r & \to \R^I\\
    \vec{x} &\mapsto \left(\vec{x} \cdot \vec{f}_{ij}\right)_{(i,j) \in I}
\end{align*} 
the linear extension of the function $F$ considered in Section \ref{sec:translation}. 

\begin{lem}\label{lem:Finj}
    The map $F$ is injective.
\end{lem}
\begin{proof}
    If the linear map $F$ were not injective, we would have $\ker(F) \neq \{0\}$. 
    As the subspace $\ker(F)$ is defined over the rationals, it is non-zero if and only if it contains a rational, and hence an integral point $\vec{k} \neq \vec{0}$. 
    Writing $\vec{k} = \vec{k}^+ - \vec{k}^-$ with $\vec{k}^\pm \in \NN^r$, we obtain that $F(\vec{k}^+) = F(\vec{k}^-)$, which implies that $\vec{k}^+ = \vec{k}^-$ by our assumption that $F|_{\NN^r}$ is an isomorphism and in particular injective. Therefore $\vec{k} = \vec{0}$, a contradiction. 
\end{proof}

\begin{lem}\label{lem:extension_isosm}
    The restriction of $F$ to $\Z^r \subset \R^r$ defines an isomorphism $F|_{\Z^r}:\ZZ^r\xrightarrow{\sim}K$.
\end{lem}
\begin{proof}
    It is evident that $F$ defines a homomorphism $\Z^r \to K$; and this homomorphism is injective by Lemma \ref{lem:Finj}.
    Thus it remains to prove the surjectivity of $F$ onto $K$. 
    To this end, let $\vec{k} \in K$ be arbitrary: we will show that it lies in the image of $F$. 
    By Assumption \ref{ass}, there exists $\vec{j} \in K \cap \NN_{>0}^I$. 
    Now we may choose $\alpha \in \NN_{>0}$ sufficiently large that $\alpha \cdot \vec{j} + \vec{k} \in \NN^I$. 
    Then both $\alpha \cdot \vec{j} + \vec{k}$ and $\alpha \cdot \vec{j}$ lie in $K \cap \NN^I =: K_0$, and hence lie in the image of $F|_{\NN^r}$, by our assumption that $F$ defines an isomorphism from $\NN^r$ to $K_0$. 
    In particular, their difference $\vec{k} = (\alpha \cdot \vec{j} + \vec{k}) - \alpha \cdot \vec{j}$ lies in the image of $F|_{\Z^r}$, as required.
\end{proof}

\begin{lem}\label{lem:extension_pos}
The restriction of $F$ to $\R^r_{\ge 0} \subset \R^r$ defines an isomorphism $F|_{\R^r_{\ge 0}} : \R^r_{\ge 0} \xrightarrow{\sim} (K \otimes \R) \cap \R^I_{\ge 0}.$
\end{lem}
\begin{proof}
    By hypothesis, $F|_{\NN^r}: \NN^r \to K \cap \NN^I$ is an isomorphism. It follows by linearity of $F$ that $F|_{\Q^r_{\ge 0}} :\Q^r_{\ge 0} \to (K \otimes \Q) \cap \Q_{\ge 0}^I$ is an isomorphism. 
    The result now follows from the fact that $\Q^r_{\ge 0} \subset \R^r_{\ge 0}$ and $K \otimes \Q \cap \Q_{\ge 0}^I \subset K \otimes \R \cap \R^I_{\ge 0}$ are dense subsets (an instance of the general fact that the rational points of any rational cone are dense), together with the continuity of $F$.
\end{proof}

We have a short exact sequence of free abelian groups
\begin{equation}\label{eq:SES}
0 \to \Z^r \xrightarrow{F|_{\Z^r}} \Z^I \xrightarrow{\V} \Z^d \to 0.
\end{equation}
Exactness on the left and in the middle follows from the fact that $F$, by Lemma \ref{lem:extension_isosm}, defines an isomorphism from $\Z^r$ to $K = \ker(\V)$; exactness on the right, i.e.,  surjectivity of $\V$, is part of Assumption \ref{ass}.
We also define the function
\begin{align*}
    \{\cdot\}: \R^I & \to [0,1)^I \\
    \{(y_{ij})\} & \coloneqq \left(\{y_{ij}\}\right),
\end{align*}
and use $\{F\}:\R^r \to [0,1)^I$ to denote its composition with $F$. 
We introduce the subsets 
\begin{align*}
    \mathcal{X} & \coloneqq \left\{ \vec{x} \in [0,1)^r : \vec{x} \cdot \vec{e}_i < 1 \,\forall\, i\right\} \qquad \text{and}\\
    \mathcal{Y}&\coloneq\left\{\harpoon y\in[0,1)^I:\sum_{j=1}^{q_i}y_{ij}<1\,\forall\, i\right\}. 
\end{align*}

Delaygue's criterion ``$\Delta_{e,f}(\vec{x}) \ge 1$ for all $\vec{x} \in [0,1)^r$ such that $\vec{e}_i \cdot \vec{x} \ge 1$ for some $i$'' is then manifestly equivalent to $\mathcal{X}^c \subseteq \{F\}^{-1}\left(\mathcal{Y}^c\right)$ (where complements are taken within $[0,1)^r$, respectively $[0,1)^I$), which in turn is equivalent to $\{F\}^{-1}(\mathcal{Y}) \cap [0,1)^r \subseteq \mathcal{X}$.

On the other hand, note that by definition of convex hull and Minkowski sum, we have
\begin{align*}
    \Delta_i & = \left\{\sum_j y_{ij} \vec{v}_{ij}: y_{ij} \ge 0, \sum_j y_{ij} \le 1\right\}, \qquad \text{and}\\
    \Delta &= \V\left(\overline{\mathcal{Y}}\right),
\end{align*}
where $\overline{\mathcal{Y}}$ is the closure of $\mathcal{Y}$ in $[0,1]^I$. (To explain why the first line has $\le 1$ rather than $=1$, recall that by definition $\Delta_i$ is the convex hull of the vectors $\vec{v}_{ij}$ together with $\vec{0}$.)

Having expressed Delaygue's criterion in terms of $\{F\}$, $\mathcal{X}$, and $\mathcal{Y}$, and our polytope $\Delta$ in terms of $\V$ and $\overline{\mathcal{Y}}$, we now proceed to address the existence of interior lattice points of $\Delta$ in the same terms. 

\begin{lem}\label{lem:latt_points}
    The point $\vec{q} \in \R^d$ is an interior lattice point of $\Delta$ if and only if $\vec{q} = \V(\vec{y})$ where
    $$\vec{y} = \{F\}(\vec{x}) \in \mathcal{Y}$$
    for some $\vec{x} \in [0,1)^r$.
\end{lem}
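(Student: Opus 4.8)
The plan is to prove both implications by unwinding the definitions and using the short exact sequence \eqref{eq:SES}. First I would establish the ``if'' direction. Suppose $\vec{y} = \{F\}(\vec{x})$ with $\vec{x} \in [0,1)^r$ and $\vec{y} \in \mathcal{Y}$, and set $\vec{q} = \V(\vec{y})$. Since $F(\vec{x}) \in \R^I$ and $\vec{y}$ differs from $F(\vec{x})$ by an integer vector $\vec{n} \in \Z^I$ (namely $n_{ij} = \lfloor \vec{f}_{ij} \cdot \vec{x}\rfloor$), and since $\V \circ F = 0$ on all of $\R^r$ (this is the linear extension of exactness at $\Z^I$ in \eqref{eq:SES}), we get $\vec{q} = \V(\vec{y}) = \V(\vec{y} - F(\vec{x})) = -\V(\vec{n}) \in \Z^d$, so $\vec{q}$ is a lattice point. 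To see it is an \emph{interior} point of $\Delta = \sum_i \Delta_i$, I would write $\vec{q} = \sum_i \V_i(\vec{y}_i)$ where $\vec{y}_i = (y_{ij})_{j=1}^{q_i}$ and $\V_i$ restricts $\V$ to the $i$-th block; then $\V_i(\vec{y}_i) = \sum_j y_{ij} \vec{v}_{ij}$ is a convex combination of $\vec{0}$ and the $\vec{v}_{ij}$ with \emph{strictly positive} weight on the origin (because $\sum_j y_{ij} < 1$ by the definition of $\mathcal{Y}$), hence lies in the interior of $\Delta_i$ — or at least in the relative interior; combined with the spanning assumption (Assumption \ref{ass}) this places $\vec{q}$ in the topological interior of $\Delta$. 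A small point to be careful about: $\V_i(\vec{y}_i)$ lands in the interior of $\Delta_i$ only if one also has each $y_{ij} > 0$ or uses that the interiors of Minkowski summands add up correctly; I would argue via $\mathrm{int}(\Delta) = \sum_i \mathrm{int}(\Delta_i)$ and the fact that a point with barycentric-type coordinates having a strictly positive coefficient on a vertex set that spans is interior.

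For the ``only if'' direction, suppose $\vec{q}$ is an interior lattice point of $\Delta$. By definition of the Minkowski sum, $\vec{q} = \sum_i \vec{q}_i$ with $\vec{q}_i \in \Delta_i$, and by a standard fact about Minkowski sums one can choose the $\vec{q}_i$ to lie in the interiors $\mathrm{int}(\Delta_i)$. Writing each $\vec{q}_i = \sum_j t_{ij} \vec{v}_{ij}$ as a convex combination of $\vec{0}$ and the $\vec{v}_{ij}$ with $t_{ij} \ge 0$ and $\sum_j t_{ij} \le 1$, the interiority forces $\sum_j t_{ij} < 1$ (the bound $=1$ would put $\vec{q}_i$ on the boundary facet opposite $\vec{0}$); collecting the $t_{ij}$ gives $\vec{t} = (t_{ij}) \in \mathcal{Y}$ with $\V(\vec{t}) = \vec{q}$. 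It remains to realize $\vec{t}$ as $\{F\}(\vec{x})$ for some $\vec{x} \in [0,1)^r$. Since $\V(\vec{t}) = \vec{q} \in \Z^d$ and $\V(\vec{t} - \vec{t}') \in \Z^d$ trivially, I would instead argue: $\vec{q} \in \Z^d$ means, by surjectivity of $\V$ over $\Z$ and exactness of \eqref{eq:SES}, that there is $\vec{m} \in \Z^I$ with $\V(\vec{m}) = \vec{q}$; then $\vec{t} - \vec{m} \in \ker(\V \otimes \R) = F(\R^r)$, so $\vec{t} - \vec{m} = F(\vec{x}_0)$ for a unique $\vec{x}_0 \in \R^r$. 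Replacing $\vec{x}_0$ by $\vec{x} = \{\vec{x}_0\} \in [0,1)^r$ changes $F(\vec{x}_0)$ by an element of $F(\Z^r) \subseteq \Z^I$, hence does not change the fractional part: $\{F\}(\vec{x}) = \{F(\vec{x}_0)\} = \{\vec{t} - \vec{m}\} = \{\vec{t}\} = \vec{t}$, where the last equality uses $\vec{t} \in [0,1)^I$. Thus $\vec{q} = \V(\vec{t}) = \V(\{F\}(\vec{x}))$ with $\{F\}(\vec{x}) = \vec{t} \in \mathcal{Y}$, as required.

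The main obstacle I anticipate is the careful handling of the interior-point argument for Minkowski sums: one must justify both that an interior point of $\Delta$ decomposes as a sum of interior points of the $\Delta_i$, and conversely that the convex-combination description with $\sum_j t_{ij} < 1$ is equivalent to lying in $\mathrm{int}(\Delta_i)$ — here one uses that $\Delta_i$ is the convex hull of $\vec{0}$ together with the $\vec{v}_{ij}$, so its facets are either facets of the ``pyramid'' through $\vec{0}$ (on which some $t_{ij} = 0$) or the facet not containing $\vec{0}$ (on which $\sum_j t_{ij}$ attains its maximum); strict interiority on the relevant lattice is then exactly $\vec{t} \in \mathcal{Y}$, modulo the spanning hypothesis ensuring full-dimensionality. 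Everything else is a routine diagram chase through \eqref{eq:SES} and the observation that $\{F\}$ is insensitive to adding elements of $F(\Z^r)$.
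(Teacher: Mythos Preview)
Your diagram chase through the short exact sequence \eqref{eq:SES} is correct and matches the paper's treatment of the lattice-point condition in both directions. The ``only if'' direction is also fine: the identity $\mathrm{relint}(\sum_i \Delta_i) = \sum_i \mathrm{relint}(\Delta_i)$ is standard, and any relative-interior point of $\Delta_i$ does admit a representation $\sum_j t_{ij}\vec{v}_{ij}$ with $\sum_j t_{ij} < 1$.

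The gap is exactly where you flag it, in the ``if'' direction, and your attempted resolution does not close it. Membership of $\vec{y}$ in $\mathcal{Y}$ only forces $\sum_j y_{ij} < 1$; it allows $y_{ij} = 0$, in which case $\V_i(\vec{y}_i)$ may land on a face of $\Delta_i$ through the origin. Your own facet description makes this explicit: relative-interior points of $\Delta_i$ correspond to all $t_{ij} > 0$ \emph{and} $\sum_j t_{ij} < 1$, i.e.\ to $\mathcal{Y} \cap (0,1)^I$, not to $\mathcal{Y}$. Concretely, take $p=2$, $d=1$, $\vec{v}_{11} = 1$, $\vec{v}_{21} = -1$; then $\mathcal{Y} = [0,1)^2$ and $\vec{y} = (0,0) \in \mathcal{Y}$ has $\V_1(\vec{y}_1) = 0 \in \partial \Delta_1$ and $\V_2(\vec{y}_2) = 0 \in \partial \Delta_2$, yet $\V(\vec{y}) = 0$ lies in $\mathrm{int}(\Delta) = (-1,1)$. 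So the conclusion holds, but not via your summand-by-summand decomposition.

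The paper closes this gap by perturbing along the fibre $\ker \V = \mathrm{im}\, F$: given $\vec{y} \in \mathcal{Y}$, replace it by $\vec{y} + F(\vec{d})$ for small $\vec{d} \in \R_{>0}^r$. This leaves $\V(\vec{y})$ unchanged, and since the matrix of $F$ has non-negative entries with no zero row (Assumption~\ref{ass} forces each $\vec{f}_{ij} \neq \vec{0}$), the perturbation pushes $\vec{y}$ into $\mathcal{Y}^\circ \coloneqq \mathcal{Y} \cap (0,1)^I$, the honest interior of the product of simplices $\overline{\mathcal{Y}}$. One then invokes the standard fact that a linear surjection carries the interior of a convex body onto the interior of its image, giving $\V(\mathcal{Y}^\circ) = \mathrm{int}(\Delta)$ and hence $\V(\mathcal{Y}) \subseteq \mathrm{int}(\Delta)$.
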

\begin{proof}
    Let $\mathcal{Y}^\circ := \mathcal{Y} \cap (0,1)^I$. 
    We start by observing the following:
    \begin{align}\label{eq:firstequiv}
        \vec{q} \in \Delta^\circ &\iff \vec{q} = \V(\vec{y}) \text{ for some $\vec{y} \in \mathcal{Y}^\circ$}\\
       \nonumber &\iff \vec{q} = \V(\vec{y}) \text{ for some $\vec{y} \in \mathcal{Y}$.}
    \end{align}
    The first line follows from \cite[Theorem 6.6]{Rockafellar}, which says that the image of the relative interior of a convex set under a linear map is the relative interior of the image. Applying this to the convex set $\overline{\mathcal{Y}}$ (which is full-dimensional) and the linear map $\V$ (which is surjective), we obtain
    $$ \V\left(\mathcal{Y}^\circ\right)  =  \V\left(\overline{\mathcal{Y}}\right)^\circ = \Delta^\circ,$$
    which is equivalent to the statement. 
    For the second line, the $\Rightarrow$ implication is clear as $\mathcal{Y}^\circ \subset \mathcal{Y}$, so it remains to prove the $\Leftarrow$ implication. 
    Suppose that $\vec{q} = \V(\vec{y})$ with $\vec{y} \in \mathcal{Y}$, then $\vec{q} = \V(\vec{y} + \epsilon \vec{k})$ for any $\epsilon \in \R$, $\vec{k} \in K$; by taking $\vec{k} \in K \cap \NN_{>0}^I$ (which exists by Assumption \ref{ass}) and $\epsilon>0$ very small, we may arrange that $\vec{y} + \epsilon \vec{k} \in \mathcal{Y}^\circ$.

    The proof will be completed by proving the following chain of equivalences:
    \begin{align}\label{eq:secondequiv}
        \vec{q} \in \Delta^\circ \cap \Z^d & \iff \vec{q} = \V(\vec{y}) = \V(\vec{p}) \text{ for some $\vec{y} \in \mathcal{Y}$, $\vec{p} \in \Z^I$}\\
        \nonumber&\iff \vec{q} = \V(\vec{y}) \text{ for some $\vec{y} \in \mathcal{Y}$, and } \vec{y} = \{F\}(\vec{x}) \text{ for some $\vec{x} \in \R^r$}\\
        \nonumber&\iff \vec{q} = \V(\vec{y}) \text{ for some $\vec{y} \in \mathcal{Y}$, and } \vec{y} = \{F\}(\vec{x}) \text{ for some $\vec{x} \in [0,1)^r$.}
    \end{align}
    The first line follows from the equivalence \eqref{eq:firstequiv} proved above, combined with the surjectivity of the map $\V$. 
    For the implication $\Rightarrow$ in the second line, we use the exactness of \eqref{eq:SES} (tensored with $\R$) to observe that $\V(\vec{y}) = \V(\vec{p})$ if and only if $\vec{y} - \vec{p} = F(\vec{x})$ for some $\vec{x} \in \R^r$; as $\vec{y} \in [0,1)^I$, this implies $\vec{y} = \{F\}(\vec{x})$. 
    For the implication $\Leftarrow$ in the second line, we set $\vec{p} = \vec{y} - F(\vec{x})$, and observe that $\vec{p} = \{F\}(\vec{x}) - F(\vec{x}) \in \Z^I$. 
    For the third line, we observe that $\{F\}(\vec{x} + \vec{a}) = \{F\}(\vec{x})$ for any $\vec{a} \in \Z^r$, so we may choose $\vec{x} \in [0,1)^r$ without loss of generality.
\end{proof}

\begin{lem}\label{lem:0 latt point}
    In the setting of Lemma \ref{lem:latt_points}, we have $\vec{q} = \vec{0}$ if and only if $\vec{x} \in \mathcal{X}$.
\end{lem}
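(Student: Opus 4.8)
The plan is to prove both implications by hand, tracking the integer vector $\vec{k} := \bigl(\lfloor \vec{x}\cdot\vec{f}_{ij}\rfloor\bigr)_{(i,j)\in I} \in \NN^I$, which records how $F(\vec{x})$ wraps into the box $[0,1)^I$, so that $\vec{y} = \{F\}(\vec{x}) = F(\vec{x}) - \vec{k}$. The implication ``$\vec{x}\in\mathcal{X}\Rightarrow\vec{q}=\vec{0}$'' is immediate: if $\vec{x}\cdot\vec{e}_i<1$ for every $i$, then since $\vec{x}$ has non-negative entries and each $\vec{f}_{ij}\in\NN^r$, every term of the sum $\vec{x}\cdot\vec{e}_i=\sum_j\vec{x}\cdot\vec{f}_{ij}$ is non-negative, hence each $\vec{x}\cdot\vec{f}_{ij}<1$; thus $\vec{k}=\vec 0$, $\{F\}(\vec{x})=F(\vec{x})$, and $\vec{q}=\V(F(\vec{x}))=\vec 0$ by exactness of \eqref{eq:SES}.

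For the converse, from $\V(\vec{y})=\vec 0=\V(F(\vec{x}))$ we obtain $\V(\vec{k})=\vec 0$, so $\vec{k}\in K\cap\NN^I=K_0$. The crux is to deduce $\vec{k}=\vec 0$, and here I would use the standing hypothesis $F\colon\NN^r\xrightarrow{\sim}K_0$: extending scalars, $F$ carries $\R_{\geq0}^r$ bijectively onto $\mathrm{cone}(K_0)=F(\R_{\geq0}^r)$, while $\mathrm{cone}(K_0)=(K\otimes\R)\cap\R_{\geq0}^I$ (a pointed rational cone is the cone on its lattice points, which here are precisely $K_0$ since $K$ is saturated in $\Z^I$). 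Consequently any $\vec u\in\R^r$ with $F(\vec u)\in\R_{\geq0}^I$ — equivalently $F(\vec u)\in(K\otimes\R)\cap\R_{\geq0}^I$, as $F(\vec u)$ automatically lies in $K\otimes\R$ — already lies in $\R_{\geq0}^r$. Writing $\vec{k}=F(\vec m)$ with $\vec m\in\NN^r$, we get $F(\vec{x}-\vec m)=F(\vec x)-\vec k=\vec{y}\in[0,1)^I\subseteq\R_{\geq0}^I$, whence $\vec{x}-\vec m\in\R_{\geq0}^r$; together with $\vec{x}\in[0,1)^r$ this forces $\vec m=\vec 0$ coordinatewise, so $\vec{k}=\vec 0$, i.e.\ $\vec{x}\cdot\vec{f}_{ij}<1$ for all $(i,j)$. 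Finally $\vec{x}\cdot\vec{e}_i=\sum_j\vec{x}\cdot\vec{f}_{ij}=\sum_j\{\vec{x}\cdot\vec{f}_{ij}\}=\sum_j y_{ij}<1$ because $\vec{y}\in\mathcal{Y}$, so $\vec{x}\in\mathcal{X}$.

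The only genuine obstacle is the step $\vec{k}=\vec 0$: knowing merely that $\vec{k}\in K_0$ does not suffice, and one must really use that $K_0$ is \emph{free}, as this is what upgrades the monoid isomorphism $F$ to an isomorphism of the ambient rational cones and thereby lets $F$ detect non-negativity of preimages. Everything else is routine bookkeeping with floors and fractional parts, together with the exactness of \eqref{eq:SES}.
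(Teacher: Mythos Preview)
Your proof is correct and follows essentially the same route as the paper's. Both arguments hinge on the cone identity $F(\R_{\ge 0}^r)=(K\otimes\R)\cap\R_{\ge 0}^I$ (derived from the hypothesis $F:\NN^r\xrightarrow{\sim}K_0$) to pull non-negativity back through $F$; the only cosmetic difference is that the paper tracks the real preimage $\vec a=F^{-1}(\vec y)$ and shows $\vec a=\vec x$, whereas you track the integer vector $\vec m=F^{-1}(\vec k)$ and show $\vec m=\vec 0$ --- these are equivalent via $\vec a=\vec x-\vec m$.
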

\begin{proof}
    To prove the $\Leftarrow$ implication, notice that if $\vec{x}\in \mathcal{X}$ then $\vec{x}\cdot\vec{e}_i<1$ for all $i$ and $\vec{x}\in [0,1)^r$. Then $\vec{x}\cdot \vec{f}_{ij}<1$ for all $i,j$, and $\{\vec{x}\cdot \vec{f}_{ij}\}=\vec{x}\cdot \vec{f}_{ij}$, so that $\{F\}(\vec{x})=F(\vec{x})$, and
    \[\vec{q} = \V(\vec{y})=\V( F(\vec{x}))=\vec{0},\]
    as $\V \circ F = 0$.
    
    To prove the $\Rightarrow$ implication, notice that if $\vec{q} = \vec{0}$ then $\vec{y} = F(\vec{a})$ for some $\vec{a} \in \R^r$, by the exactness of \eqref{eq:SES}. 
    As $\vec{y} \in \mathcal{Y} \subset \R_{\ge 0}^I$, we must have $\vec{a} \in \R_{\ge 0}^r$ by Lemma \ref{lem:extension_pos}. 
    As $F(\vec{a}) \in \mathcal{Y} \subset [0,1)^I$, and the matrix of $F$ has non-negative integer entries and is of full rank, it follows that $\vec{a} \in [0,1)^r$. 
    As $\{F\}(\vec{x}) = F(\vec{a})$, we have $F(\vec{x}-\vec{a}) \in \Z^I$, from which it follows by exactness of \eqref{eq:SES} that $\vec{x} - \vec{a} \in \Z^r$. 
    As both $\vec{x}$ and $\vec{a}$ lie in $[0,1)^r$, it follows that $\vec{x} = \vec{a}$.

    Thus, we have $F(\vec{x}) = \{F\}(\vec{x}) \in \mathcal{Y}$, with $\vec{x} \in [0,1)^r$, from which it follows that $\vec{x} \in \mathcal{X}$. 
\end{proof}

\begin{proof}[Proof of Proposition \ref{prop:eq_crit}]
    Putting together Lemmas \ref{lem:latt_points} and \ref{lem:0 latt point}, $\vec{0}$ is the unique interior lattice point of $\Delta$ if and only if $\{F\}^{-1}(\mathcal{Y}) \cap [0,1)^r \subset \mathcal{X}$, which we have established is equivalent to Delaygue's criterion.
\end{proof}

\section{Proof of Theorem \ref{thm:conj1b}}\label{sec:Conj1b proof}

Recall that Conjecture \ref{conj:1} \eqref{it:1b} proposed that if $(\vec{v}_{ij})$ is Fano, then for all $(i,j) \in I$, $\log \psi^{\text{n}}_{ij} = \phi_{ij}/\phi_0$ has non-negative coefficients; i.e., it lies in $\Q_{\ge 0}[[\vec{z}]]_{K_0}$. Theorem \ref{thm:conj1b} states that this holds when the rank of $K$ is $1$; we now turn to its proof.

If $K$ has rank $1$, then by Assumption \ref{ass}, we have $K \cap \Z_{>0}^I \neq \emptyset$, from which it follows that $K_0 \cong \NN$. 
Thus we may translate to Delaygue's setup in accordance with Section \ref{sec:translation}. 
As $r=1$, the vectors $\vec{e}_i$ and $\vec{f}_{ij}$ are in fact natural numbers $e_i$ and $f_{ij}$. 
By Lemma \ref{lem:translation}, $\phi_{ij}/\phi_0$ has non-negative coefficients if and only if $b(w)/a(w)$ has, where
$$
    a(w) = \sum_{n =0}^\infty a_n \cdot w^n,\qquad b(w)=\sum_{n=1}^\infty a_n \cdot c_n \cdot w^n,
$$
where we have set 
$$a_n = \prod_{\ell=1}^p \comb(nf_{\ell 1},\ldots,nf_{\ell q_\ell}),\qquad c_n = H_{ne_i} - H_{nf_{ij}}.$$
(We fix $i$ and $j$ for the purposes of the proof.)

In order to show that $b/a$ has non-negative coefficients, it suffices by \cite[Lemmas 2.1 and 2.2]{krattenthaler2011analytic} to prove:
\begin{enumerate}
    \item \label{item:phi_0} $a_0=1$;
    \item \label{item:phi_1} $a_1> 0$;
    \item \label{item:convex} $a_n^2\leq a_{n-1}a_{n+1}$ for all $n\ge 1$ (i.e., $a_n$ is log-convex); and
    \item \label{item:increasing} $0\leq c_{n}\leq c_{n+1}$ \quad (i.e., $c_n$ is nonnegative and increasing in $n$).
\end{enumerate}
To prove item \ref{item:phi_0}, we simply observe that $\comb(\vec{0}) = 1$.
To prove item \ref{item:phi_1}, we simply observe that $\comb(\vec{k})>0$ for any $\vec{k} \in \NN^{q_i}$.

Thus it remains to prove item \ref{item:convex} and item \ref{item:increasing}. 
item \ref{item:convex} follows from Lemma \ref{lem:Phi_convex} below, and item \ref{item:increasing} follows from Lemma \ref{lem:H_k increasing} below, so the proof of Theorem \ref{thm:conj1b} is complete.

\begin{lem}[Proposition 8.1 of \cite{alexandersson2019cone}]\label{lem:An_increasing}
    For $\sigma$ a convex and decreasing function on $[0,1]$, the function
    \[A_s\coloneq \frac{\sigma\left(\frac{1}{s}\right)+\sigma\left(\frac{2}{s}\right)+\dots+\sigma\left(\frac{s}{s}\right)}{s}\]
    is increasing in $s$.
\end{lem}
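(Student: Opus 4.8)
The plan is to prove the equivalent statement that $A_s \le A_{s+1}$ for every positive integer $s$. The idea is to use convexity to bound each summand $\sigma(j/s)$ of the $s$-th average from above by a convex combination of two consecutive summands of the $(s+1)$-st average, and then to use the monotonicity of $\sigma$ to absorb the leftover boundary term.

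First I would record the elementary identity, valid for $1 \le j \le s$,
\[ \frac{j}{s} \;=\; \frac{s-j}{s}\cdot\frac{j}{s+1} \;+\; \frac{j}{s}\cdot\frac{j+1}{s+1}, \]
which exhibits $j/s$ as a convex combination of the points $j/(s+1)$ and $(j+1)/(s+1)$ of $[0,1]$, with weights $(s-j)/s$ and $j/s$ lying in $[0,1]$ and summing to $1$. Convexity of $\sigma$ then yields
\[ \sigma\!\left(\frac{j}{s}\right) \;\le\; \frac{s-j}{s}\,\sigma\!\left(\frac{j}{s+1}\right) + \frac{j}{s}\,\sigma\!\left(\frac{j+1}{s+1}\right) \qquad (1 \le j \le s). \]

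Next I would sum this over $j=1,\dots,s$, reindex the second family of terms by $k=j+1$, and collect coefficients. A short computation shows that $\sigma\!\left(k/(s+1)\right)$ appears with total coefficient $(s-1)/s$ for $1 \le k \le s$ and with coefficient $1$ for $k=s+1$; writing $T \coloneqq \sum_{k=1}^{s+1}\sigma\!\left(k/(s+1)\right)$, this gives
\[ \sum_{j=1}^{s}\sigma\!\left(\frac{j}{s}\right) \;\le\; \frac{s-1}{s}\,T + \frac{1}{s}\,\sigma(1). \]
Finally, since $\sigma$ is decreasing and $k/(s+1) \le 1$ for all $k \le s+1$, we have $\sigma(1) \le \sigma\!\left(k/(s+1)\right)$ for each $k$, hence $(s+1)\,\sigma(1) \le T$. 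Substituting $\sigma(1) \le T/(s+1)$ into the previous display and using $\frac{s-1}{s}+\frac{1}{s(s+1)}=\frac{s}{s+1}$, we obtain $\sum_{j=1}^{s}\sigma(j/s) \le \frac{s}{s+1}\,T$, i.e.\ $A_s \le A_{s+1}$.

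The only mildly delicate point is the coefficient bookkeeping after reindexing, where the boundary indices $k=1$ and $k=s+1$ must be treated separately; everything else is routine. I would also note that both hypotheses are genuinely needed — convexity in the first inequality and monotonicity in the last step — since for a merely decreasing $\sigma$ (one that is flat and then drops sharply) the sequence $A_s$ need not be monotone.
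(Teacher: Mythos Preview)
Your argument is correct. The identity $\frac{j}{s}=\frac{s-j}{s}\cdot\frac{j}{s+1}+\frac{j}{s}\cdot\frac{j+1}{s+1}$ checks out, the coefficient bookkeeping after reindexing is accurate (yielding $(s-1)/s$ for $1\le k\le s$ and $1$ for $k=s+1$), and the final algebraic simplification $\frac{s-1}{s}+\frac{1}{s(s+1)}=\frac{s}{s+1}$ is right.

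As for comparison with the paper: the paper does not give a proof of this lemma at all --- it simply quotes the statement as Proposition~8.1 of \cite{alexandersson2019cone} and moves on. Your write-up therefore supplies a self-contained elementary argument where the paper only gives a citation, which is a genuine addition. The remark at the end, that convexity cannot be dropped (a step function flat then dropping gives $A_2>A_3$), is also correct and a nice touch, though of course not needed for the lemma itself.
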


\begin{lem}\label{lem:Phi_convex}
    The sequence $a_n$ is log-convex: $a_n^2\leq a_{n-1}a_{n+1}$.
\end{lem}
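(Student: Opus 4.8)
The plan is to exploit the multiplicativity of $a_n = \prod_{\ell=1}^p \comb(nf_{\ell 1},\ldots,nf_{\ell q_\ell})$: since a product of log-convex positive sequences is log-convex, it suffices to treat a single factor, i.e. to show that $n \mapsto \comb(nf_1,\ldots,nf_q) = \frac{(n(f_1+\cdots+f_q))!}{\prod_j (nf_j)!}$ is log-convex in $n$ for fixed nonnegative integers $f_1,\ldots,f_q$. Writing $e = f_1+\cdots+f_q$, the single-factor claim is
$$\left(\frac{(ne)!}{\prod_j (nf_j)!}\right)^2 \le \frac{((n-1)e)!}{\prod_j ((n-1)f_j)!}\cdot\frac{((n+1)e)!}{\prod_j ((n+1)f_j)!}.$$

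First I would reduce this to a statement about the Gamma function, or rather prove it directly via a known log-convexity result for ratios of factorials along arithmetic progressions. The cleanest route I expect is to invoke Lemma \ref{lem:An_increasing}: the quantity $\log \comb(nf_1,\ldots,nf_q)$ can be written, via Stirling-type telescoping, as a sum of terms of the form $\sum_{k} \bigl(\log(ne - k) - \ldots\bigr)$, and $\log$-convexity in $n$ becomes monotonicity of a Cesàro-type average of a convex decreasing function. Concretely, set $\sigma(t) = -\log\Gamma(1 - $\text{(affine in }t)$)$ or work with the discrete identity
$$\log\frac{(ne)!}{((n-1)e)!} - \sum_j \log\frac{(nf_j)!}{((n-1)f_j)!} = \sum_{k=1}^{e}\log(ne - k + 1) - \sum_j\sum_{k=1}^{f_j}\log(nf_j - k + 1),$$
and show the right-hand side is nondecreasing in $n$; log-convexity of $a_n$ is exactly the statement that this "discrete logarithmic derivative" is nondecreasing. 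Each summand $\log(nm - k + 1)$, viewed through the substitution making it $m\cdot$ an average of $\log$ evaluated at points $(nm-k+1)/m$, is governed by the concavity of $\log$, so after regrouping the whole expression is an instance of "$A_s$ is increasing" from Lemma \ref{lem:An_increasing} applied with $\sigma = -\log$ (convex decreasing on any interval of positive reals) — with the $e$-term contributing positively and the $f_j$-terms contributing negatively but with total weight balanced by $e = \sum_j f_j$.

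The main obstacle will be bookkeeping: getting the telescoped sums to line up so that the $e$-contribution and the $\sum_j f_j$-contributions can be compared via a single application of Lemma \ref{lem:An_increasing} (or its underlying convexity), rather than term-by-term where the inequality can go the wrong way. The key structural input that makes it work is the constraint $e = f_1 + \cdots + f_q$ (so the "number of factorials upstairs" matches "downstairs" in total weight), which is exactly the combinatorial content of $\comb$ being a genuine multinomial coefficient; without it the claim is false. I would organize the argument so that this balancing is transparent — e.g. write $\log a_n$ directly as $\sum_{\ell=1}^{p} g_\ell(n)$ with $g_\ell(n) = \log\Gamma(ne_\ell+1) - \sum_j \log\Gamma(nf_{\ell j}+1)$, prove each $g_\ell$ is convex in $n$ on $\mathbb{R}_{\ge 0}$ using that $t\mapsto \log\Gamma(1+t)$ is convex together with the identity $e_\ell = \sum_j f_{\ell j}$ (a superadditivity/convexity argument: $\log\Gamma(1+a+b) + \log\Gamma(1) \ge \log\Gamma(1+a)+\log\Gamma(1+b)$ iterated, in the form needed for second differences), and then sum over $\ell$ and multiply the resulting factors.
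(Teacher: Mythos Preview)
Your overall strategy matches the paper's: reduce to a single factor $\ell$ (product of log-convex sequences is log-convex), rewrite log-convexity as nonnegativity of the second difference of $\log a_n$, and close with Lemma~\ref{lem:An_increasing} using the constraint $e_\ell=\sum_m f_{\ell m}$. The gap is in the choice of function fed to Lemma~\ref{lem:An_increasing}.

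Taking $\sigma=-\log$ does not give what you need. If you set $\sigma(x)=-\log(n-1+x)$ on $[0,1]$ and apply the lemma, you learn that $s\mapsto \tfrac{1}{s}\sum_{k=1}^{s}\log(n-1+k/s)$ is decreasing; this compares the \emph{first} difference $D(n)=\log a_n-\log a_{n-1}$ across different values of $s$, not across different values of $n$, and so does not yield $D(n+1)\ge D(n)$. The quantity $D(n+1)-D(n)$, written for a single factor $\ell$, is
\[
\sum_{d=1}^{e_\ell}\log\frac{n+d/e_\ell}{\,n-1+d/e_\ell\,}\;-\;\sum_m\sum_{c=1}^{f_{\ell m}}\log\frac{n+c/f_{\ell m}}{\,n-1+c/f_{\ell m}\,},
\]
so the right test function (for fixed $n$) is $\log\sigma$ with $\sigma(x)=\dfrac{n+x}{n-1+x}$; one checks $(\log\sigma)''(x)=\tfrac{1}{(n-1+x)^2}-\tfrac{1}{(n+x)^2}>0$ and $\log\sigma$ is decreasing. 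Lemma~\ref{lem:An_increasing} then gives $\bigl(\prod_{k=1}^{s}\sigma(k/s)\bigr)^{1/s}$ increasing in $s$; since $f_{\ell m}\le e_\ell$, raise to the $f_{\ell m}$-th power and take the product over $m$, where the exponents sum to $e_\ell$, to obtain $\prod_m\prod_c\sigma(c/f_{\ell m})\le\prod_d\sigma(d/e_\ell)$, which is exactly $D(n+1)-D(n)\ge 0$.

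Your fallback via convexity of $t\mapsto\log\Gamma(1+t)$ also has a gap: $n\mapsto\log\Gamma(ne_\ell+1)$ is convex, but $n\mapsto-\sum_j\log\Gamma(nf_{\ell j}+1)$ is concave, and the sum of a convex and a concave function need not be convex. The superadditivity $\log\Gamma(1+a+b)\ge\log\Gamma(1+a)+\log\Gamma(1+b)$ only gives $g_\ell(n)\ge 0$, not convexity of $g_\ell$; the latter amounts to the trigamma inequality $e_\ell^{\,2}\,\psi'(1+ne_\ell)\ge\sum_j f_{\ell j}^{\,2}\,\psi'(1+nf_{\ell j})$, which is true but not a formal consequence of convexity of $\log\Gamma$ and still needs an argument of the Lemma~\ref{lem:An_increasing} type (with $\sigma(x)=1/(n+x)$).
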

\begin{proof}
    Using the definition $a_n=\frac{\prod_{\ell}(ne_{\ell})!}{\prod_{\ell}\prod_{m}(nf_{\ell m})!}$, we can state the desired inequality as follows:
    \[\left(\frac{\prod_{\ell}(ne_{\ell})!}{\prod_{\ell}\prod_{m}(nf_{\ell m})!}\right)^2\leq \left(\frac{\prod_{\ell}((n-1)e_{\ell})!}{\prod_{\ell}\prod_{m}((n-1)f_{\ell m})!}\right)\left(\frac{\prod_{\ell}((n+1)e_{\ell})!}{\prod_{\ell}\prod_{m}((n+1)f_{\ell m})!}\right).\]
    We will prove the stronger statement that for any $\ell$,
    \begin{align*}
   \notag     \left(\frac{(ne_{\ell})!}{\prod_{m}(nf_{\ell m})!}\right)^2 &\leq \left(\frac{((n-1)e_{\ell})!}{\prod_{m}((n-1)f_{\ell m})!}\right)\left(\frac{((n+1)e_{\ell})!}{\prod_{m}((n+1)f_{\ell m})!}\right),
   \end{align*}
   which is equivalent (by cancelling terms and rearranging) to:
   \begin{align}
   \label{eq:a_n convex}     \frac{\prod_{d=1}^{e_{\ell}}(n-1)e_{\ell}+d}{\prod_{d=1}^{e_{\ell}}ne_{\ell}+d} &\leq \frac{\prod_m\prod_{c=1}^{f_{\ell m}}(n-1)f_{\ell m}+c}{\prod_m\prod_{c=1}^{f_{\ell m}}nf_{\ell m}+c}.
    \end{align}
    Therefore it suffices to prove equation \eqref{eq:a_n convex}.
    
    To this end, let $\sigma:[0,1]\to\R_{\geq 1}$ be the function 
    \[\sigma(x)=\frac{n+x}{n-1+x}.\]
    Then $\sigma$ is decreasing, and so $\log\sigma$ is decreasing. 
    We claim that $\log\sigma$ is also convex. To see this, observe that
    $\log\sigma(x)=\log(n+x)-\log(n-1+x)$. Then 
     \[\left(\log\sigma(x)\right)''=\frac{1}{(n-1+x)^2}-\frac{1}{(n+x)^2},\]
     which is positive as $n+x>n-1+x$.
        
    By Lemma \ref{lem:An_increasing}, we then have that
    \[\frac{\log\sigma\left(\frac{1}{s}\right)+\dots+\log\sigma\left(\frac{s}{s}\right)}{s}\leq \frac{\log\sigma\left(\frac{1}{s+1}\right)+\dots+\log\sigma\left(\frac{s+1}{s+1}\right)}{s+1},\]
    so that 
    \[\left(\sigma\left(\frac{1}{s}\right)\cdot\dots\cdot\sigma\left(\frac{s}{s}\right)\right)^{\frac{1}{s}} \leq 
    \left(\sigma\left(\frac{1}{s+1}\right)\cdot\dots\cdot\sigma\left(\frac{s+1}{s+1}\right)\right)^{\frac{1}{s+1}}.\]
    Since $e_{\ell}\ge f_{\ell m}$ for all $1\leq m\leq q_{\ell}$, we have that
    \begin{align*}
        \left(\sigma\left(\frac{1}{f_{\ell m}}\right)\cdot\dots\cdot\sigma\left(\frac{f_{\ell m}}{f_{\ell m}}\right)\right)^{\frac{1}{f_{\ell m}}} &\leq \left(\sigma\left(\frac{1}{e_{\ell}}\right)\cdot\dots\cdot\sigma\left(\frac{e_{\ell}}{e_{\ell}}\right)\right)^{\frac{1}{e_{\ell}}}\\
    \Rightarrow    \sigma\left(\frac{1}{f_{\ell m}}\right)\cdot\dots\cdot\sigma\left(\frac{f_{\ell m}}{f_{\ell m}}\right)
    &\leq  \left(\sigma\left(\frac{1}{e_{\ell}}\right)\cdot\dots\cdot\sigma\left(\frac{e_{\ell}}{e_{\ell}}\right)\right)^{\frac{f_{\ell m}}{e_{\ell}}}.
    \end{align*}
    Taking the product over $m$, the exponent of the RHS becomes $\sum_m\frac{f_{\ell m}}{e_{\ell}}=1$. That is to say, we obtain 
    \[\prod_m \sigma\left(\frac{1}{f_{\ell m}}\right)\cdot\dots\cdot\sigma\left(\frac{f_{\ell m}}{f_{\ell m}}\right)\leq \sigma\left(\frac{1}{e_{\ell}}\right)\cdot\dots\cdot\sigma\left(\frac{e_{\ell}}{e_{\ell}}\right),\]
    which we can expand to get 
    \[\prod_m\prod_{c=1}^{f_{\ell m}}\frac{nf_{\ell m}+c}{(n-1)f_{\ell m}+c}\leq \prod_{d=1}^{e_{\ell}}\frac{ne_{\ell}+d}{(n-1)e_{\ell}+d},\]
    which is equivalent to the required inequality \eqref{eq:a_n convex}.
\end{proof}

\begin{lem}[Proposition 8.2 of \cite{alexandersson2019cone}]\label{lem:H_k increasing}
    The function $c_{n}$ given by
    \[c_{n}\coloneq \sum_{\ell=1}^{ne_i}\frac{1}{\ell}-\sum_{\ell=1}^{nf_{ij}}\frac{1}{\ell}\]
    is nonnegative and increasing in $n$.
\end{lem}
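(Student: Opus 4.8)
The plan is to show that $c_n = H(ne_i) - H(nf_{ij})$ is nonnegative and increasing in $n$, exploiting the fact that $e_i \ge f_{ij}$ (which follows from \eqref{eq:e from f} together with $\vec f_{il} \in \NN^r$, so that $\vec e_i \cdot \vec n \ge \vec f_{ij} \cdot \vec n$ for all $\vec n$; in the rank-one situation this is simply $e_i \ge f_{ij}$ as natural numbers). Nonnegativity is then immediate: $H$ is a nondecreasing function of a nonnegative integer argument, and $ne_i \ge nf_{ij} \ge 0$, so $c_n \ge 0$. Moreover $c_n = 0$ is possible (if $e_i = f_{ij}$), consistent with what we need in the proof of Theorem \ref{thm:conj1b}, where only $0 \le c_n \le c_{n+1}$ is required.

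For monotonicity, I would write $c_n$ as a Riemann-type sum and apply Lemma \ref{lem:An_increasing}. Precisely, using $H(m) = \sum_{\ell=1}^m \frac1\ell$, one has
\[
\frac{H(ne_i)}{ne_i} = \frac{1}{ne_i}\sum_{\ell=1}^{ne_i} \frac{1}{\ell} = \frac{1}{ne_i}\sum_{\ell=1}^{ne_i} \sigma\!\left(\frac{\ell}{ne_i}\right), \qquad \text{where } \sigma(x) = \frac{1}{ne_i \, x}\,,
\]
and similarly for $nf_{ij}$. This does not quite work directly because the function $\sigma$ depends on $n$; instead I would fix a single convex decreasing function and track the scaling. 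The cleanest route: observe that
\[
c_{n+1} - c_n = \sum_{\ell = ne_i+1}^{(n+1)e_i} \frac{1}{\ell} - \sum_{\ell = nf_{ij}+1}^{(n+1)f_{ij}} \frac{1}{\ell}
= \sum_{a=1}^{e_i} \frac{1}{ne_i + a} - \sum_{b=1}^{f_{ij}} \frac{1}{nf_{ij}+b}.
\]
Now $\sum_{a=1}^{e_i} \frac{1}{ne_i+a} = \frac{1}{e_i}\sum_{a=1}^{e_i} \rho\!\left(\frac{a}{e_i}\right)$ with $\rho(x) = \frac{e_i}{ne_i + e_i x} = \frac{1}{n + x}$, and likewise $\sum_{b=1}^{f_{ij}} \frac{1}{nf_{ij}+b} = \frac{1}{f_{ij}}\sum_{b=1}^{f_{ij}} \rho\!\left(\frac{b}{f_{ij}}\right)$ with the \emph{same} function $\rho(x) = \frac{1}{n+x}$, which is convex and decreasing on $[0,1]$. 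By Lemma \ref{lem:An_increasing}, $A_s = \frac1s\sum_{t=1}^s \rho(t/s)$ is increasing in $s$, and since $e_i \ge f_{ij}$ we get $A_{f_{ij}} \le A_{e_i}$, i.e. $\frac{1}{f_{ij}}\sum_{b=1}^{f_{ij}}\rho(b/f_{ij}) \le \frac{1}{e_i}\sum_{a=1}^{e_i}\rho(a/e_i)$. Multiplying through by the appropriate quantities — more precisely, noting $\sum_{b} \frac{1}{nf_{ij}+b} = f_{ij} A_{f_{ij}}$ and $\sum_a \frac{1}{ne_i+a} = e_i A_{e_i}$ is not directly what we want — I instead need $\sum_a \frac{1}{ne_i+a} \ge \sum_b \frac{1}{nf_{ij}+b}$. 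Since the number of terms on the left ($e_i$) is at least that on the right ($f_{ij}$) and each term $\frac{1}{ne_i+a}$ can be compared... this is where care is needed, so let me isolate it.

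\textbf{The main obstacle} is exactly the last comparison: from $A_{f_{ij}} \le A_{e_i}$ we get an inequality of \emph{averages}, whereas $c_{n+1} - c_n \ge 0$ is an inequality of \emph{sums} with differing numbers of terms. The resolution is to handle the common-denominator bookkeeping honestly: write $e_i = f_{ij} + \delta$ with $\delta \ge 0$, and note $\sum_{a=1}^{e_i}\frac{1}{ne_i+a} \ge \sum_{a=1}^{e_i}\frac{1}{ne_i+e_i} \cdot (\text{something})$ is too lossy. The correct statement to extract from Lemma \ref{lem:An_increasing} is the telescoping one used for $\sigma$ in Lemma \ref{lem:Phi_convex}: apply it with $\rho(x) = \frac{1}{n+x}$ to get $\frac{1}{f_{ij}}\sum_{b=1}^{f_{ij}} \rho(b/f_{ij}) \le \frac{1}{e_i}\sum_{a=1}^{e_i} \rho(a/e_i)$, then multiply both sides by $e_i$ (note $e_i/f_{ij} \ge 1$) to obtain
\[
\sum_{b=1}^{f_{ij}} \rho\!\left(\frac{b}{f_{ij}}\right) \le \frac{e_i}{f_{ij}}\sum_{b=1}^{f_{ij}}\rho\!\left(\frac{b}{f_{ij}}\right) \le \sum_{a=1}^{e_i}\rho\!\left(\frac{a}{e_i}\right),
\]
which is precisely $\sum_{b=1}^{f_{ij}}\frac{1}{nf_{ij}+b} \le \sum_{a=1}^{e_i}\frac{1}{ne_i+a}$, i.e. $c_{n+1} \ge c_n$. (If $f_{ij} = 0$ the left sum is empty and the claim is trivial.) I expect this argument to be short; the only delicate point is making sure the same $\rho$ appears on both sides, which works because the substitutions $a = e_i x$ and $b = f_{ij} x$ both land the denominator on $n + x$ — an observation that does not hold for the multiplicative function $\sigma$ of Lemma \ref{lem:Phi_convex}, so this lemma genuinely needs its own (additive) argument rather than following from the previous one.
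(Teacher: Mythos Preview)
Your approach is the same as the paper's: reduce $c_{n+1}-c_n\ge 0$ to
\[
\sum_{a=1}^{e_i}\frac{1}{ne_i+a}\ \ge\ \sum_{b=1}^{f_{ij}}\frac{1}{nf_{ij}+b},
\]
and apply Lemma~\ref{lem:An_increasing} with $\rho(x)=\frac{1}{n+x}$, which is convex and decreasing on $[0,1]$. But there is a genuine algebra slip in your final step. Your ``obstacle'' (inequality of averages versus inequality of sums) is illusory: with this choice of $\rho$ one has
\[
A_s \;=\; \frac{1}{s}\sum_{\ell=1}^{s}\rho\!\left(\frac{\ell}{s}\right)
\;=\; \frac{1}{s}\sum_{\ell=1}^{s}\frac{1}{\,n+\ell/s\,}
\;=\; \sum_{\ell=1}^{s}\frac{1}{ns+\ell},
\]
so the $1/s$ cancels and $A_s$ \emph{is} the sum you want. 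Hence $A_{f_{ij}}\le A_{e_i}$ from Lemma~\ref{lem:An_increasing} is already exactly the desired inequality; this is precisely what the paper does (citing the same statement as \cite[Corollary~8.2]{alexandersson2019cone}).

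Your attempted fix, by contrast, does not land where you say it does. You correctly derive $\sum_{b}\rho(b/f_{ij})\le \sum_{a}\rho(a/e_i)$, but $\rho(b/f_{ij})=\frac{1}{n+b/f_{ij}}=\frac{f_{ij}}{nf_{ij}+b}$, not $\frac{1}{nf_{ij}+b}$. So what you have proved is
\[
f_{ij}\sum_{b=1}^{f_{ij}}\frac{1}{nf_{ij}+b}\ \le\ e_i\sum_{a=1}^{e_i}\frac{1}{ne_i+a},
\]
which is strictly weaker than the inequality you need (and not ``precisely'' it). The same slip appears earlier when you write $\sum_b\frac{1}{nf_{ij}+b}=f_{ij}A_{f_{ij}}$; in fact it equals $A_{f_{ij}}$, which you had already computed a few lines before. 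Once you correct this, the proof is immediate and coincides with the paper's.
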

\begin{proof}
    Nonnegativity follows from the fact that
    $$c_n = \sum_{\ell=nf_{ij}+1}^{ne_i} \frac{1}{\ell} \ge 0,$$
    as $e_i \ge f_{ij}$.
    
    Proving that $c_{n+1}\geq c_{n}$
    is equivalent to proving that 
    \[\sum_{\ell=1}^{(n+1)e_i}\frac{1}{\ell}-\sum_{\ell=1}^{(n+1)f_{ij}}\frac{1}{\ell}\geq 
    \sum_{\ell=1}^{ne_i}\frac{1}{\ell}-\sum_{\ell=1}^{nf_{ij}}\frac{1}{\ell}.\]
    By \cite[Corollary 8.2]{alexandersson2019cone}, we have that 
    \[\frac{1}{m}\sum_{\ell=1}^{m}\frac{1}{n+\frac{\ell}{m}}\]
    is increasing in $m$. Therefore, as $e_i\geq f_{ij}$ for all $j$,
    \[\frac{1}{e_i}\sum_{\ell=1}^{e_i}\frac{1}{n+\frac{\ell}{e_i}}\geq \frac{1}{f_{ij}}\sum_{\ell=1}^{f_{ij}}\frac{1}{n+\frac{\ell}{f_{ij}}}.\]
    Then we can bring in the constants to get
    \[\sum_{\ell=1}^{e_i}\frac{1}{ne_i+\ell}\geq\sum_{\ell=1}^{f_{ij}}\frac{1}{nf_{ij}+\ell},\]
    and reindexing this gives us
    \[\sum_{\ell=ne_i+1}^{(n+1)e_i}\frac{1}{\ell}\geq \sum_{\ell=nf_{ij}+1}^{(n+1)f_{ij}}\frac{1}{\ell},\]
    which is equivalent to
    \[\sum_{\ell=1}^{(n+1)e_i}\frac{1}{\ell}-\sum_{\ell=1}^{(n+1)f_{ij}}\frac{1}{\ell}\geq 
    \sum_{\ell=1}^{ne_i}\frac{1}{\ell}-\sum_{\ell=1}^{nf_{ij}}\frac{1}{\ell},\]
    proving the Lemma.
    
\end{proof}

\appendix

\section{Computer checks}\label{sec:code}

We list the checks of Conjectures \ref{conj:1} and \ref{conj:2} we have performed on a computer. Our program, written using SageMath \cite{sagemath}, did the following. Define $\vec{1} \in \Z^I$ to be the vector all of whose entries are $1$; and $\vec{1}_{ij} \in \Z^I$ to be the vector whose $(i,j)$ entry is $1$, and all other entries are $0$. Given a `precision' parameter $P$, the program found $d$ such that 
$$K_0(d) = \# \{ \vec{k} \in K_0| \vec{k} \cdot \vec{1} \le d\} \ge P;$$
and $d'$ such that
$$K_{0,ij}(d') = \# \{ \vec{k} \in K_0 + K_{ij}| \vec{k} \cdot (2\cdot\vec{1} - \vec{1}_{ij}) \le d'\} \ge P.$$
It is clear that $K_0(d)$ is always finite; it is also true that $K_{0,ij}(d')$ is always finite, by the proof of Lemma \ref{lem:cone_conv}. 

We say `Conjecture \ref{conj:1} \eqref{it:1a} holds for the first $P$ terms' if the coefficient of $\vec{z}^{\vec{k}}$ in $\psi_{ij}^{\text{n}}$ is an integer for all $\vec{k} \in K_0(d)$.
We say `Conjecture \ref{conj:1} \eqref{it:1b} holds for the first $P$ terms' if the coefficient of $\vec{z}^{\vec{k}}$ in $\phi_{ij}/\phi_0$ is positive for all $\vec{k} \in K_0(d)$. 
We say `Conjecture \ref{conj:2} holds for the first $P$ terms' if the coefficient of $\vec{z}^{\vec{k}}$ in $\exp(\tau_{ij}/\phi_0)$ is an integer for all $\vec{k} \in K_{0,ij}(d')$. Note that the conjectures `$\psi_{ij}^{\text{n}} = \exp(\phi_{ij}/\phi_0)$ and $\psi_{ij}^{\text{n}} = \exp((\phi_{ij}+\tau_{ij})/\phi_0)$ have integer coefficients' are together equivalent to the conjectures `$\psi_{ij}^{\text{n}} = \exp(\phi_{ij}/\phi_0)$ and $\exp(\tau_{ij}/\phi_0)$ have integer coefficients'; so we test the latter pair of conjectures, as it is typically faster to compute $\exp(\tau_{ij}/\phi_0)$ (as it typically has fewer terms) than $\exp((\phi_{ij}+\tau_{ij})/\phi_0)$.

\begin{figure}
    \centering
    \includegraphics[width=0.7\linewidth]{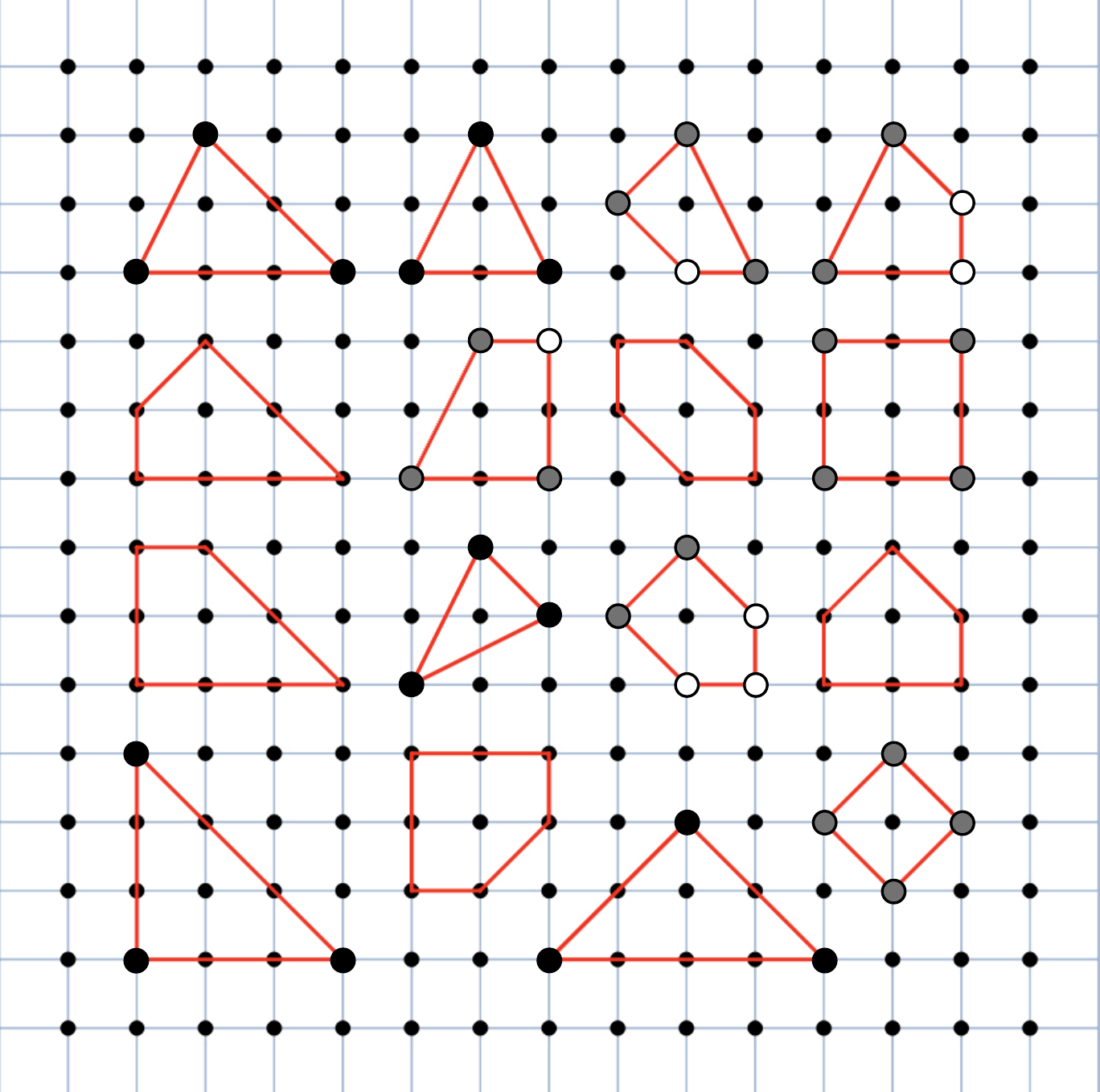}
    \caption{The set of 2-dimensional reflexive polytopes, up to unimodular equivalence. For each polytope, we consider the Fano data $(\vec{v}_j)$ given by the vertices of the polytope. For each vertex $\vec{v}_j$ of a polytope, we indicate the status of Conjecture \ref{conj:1} \eqref{it:1a} (i.e., whether $\psi_j^{\text{n}}$ has been proven to have integer coefficients, by Theorem \ref{thm:conj1a}), Conjecture \ref{conj:1} \eqref{it:1b} (i.e., whether $\log \psi_j^{\text{n}}$ has been proven to have positive coefficients, by Theorem \ref{thm:conj1b}), and Conjecture \ref{conj:2} (i.e., whether $\psi_j^{\text{t}}$ has been proven to have integer coefficients, by showing that it is equal to $\psi_j^{\text{n}}$ using Lemma \ref{lem:12equiv} and applying Theorem \ref{thm:conj1a}). 
    We do this by placing a black dot on the vertex if Conjectures \ref{conj:1} \eqref{it:1a}, \ref{conj:1} \eqref{it:1b}, and \ref{conj:2} are proven; a grey dot on the vertex if only Conjectures \ref{conj:1} \eqref{it:1a} and \ref{conj:2} are proven; a white dot if only Conjecture \ref{conj:1} \eqref{it:1a} is proven; and no dot if all three conjectures are open. }
    \label{fig:2_diml_polytopes}
\end{figure}

Let $(\vec{v}_j)$ be the vertices of a $2$-dimensional reflexive polytope, of which there are 16 up to unimodular equivalence, as shown in Figure \ref{fig:2_diml_polytopes}. 
We indicate the cases of Conjectures \ref{conj:1} and \ref{conj:2} which are proven in the Figure. 
    We checked the remaining cases on a computer, and found that they hold for the first $50$ terms. 
Note that this is not the only way of generating Fano data $(\vec{v}_j)$ from the $2$-dimensional reflexive polytopes: we may take lattice points other than the vertices, and we may take each lattice point multiple times. However we have not performed further checks in such cases.

    

Let $(\vec{v}_j)$ be the vertices of a $3$-dimensional reflexive polytope, of which there are 4319 \cite{kreuzer1998classification}. Conjecture \ref{conj:1} \eqref{it:1a} holds by Theorem \ref{thm:conj1a} for 825 of these examples, and Conjecture \ref{conj:1} \eqref{it:1b} holds by Theorem \ref{thm:conj1b} for 48 of these examples. 
We checked that Conjectures \ref{conj:1} and \ref{conj:2} hold for the first $50$ terms, for all but 23 cases which turned out to be especially computationally intensive (because $K$ has low rank, so the lattice points in $K_0(d)$ have relatively large coefficients); we checked that Conjectures \ref{conj:1} and \ref{conj:2} hold for the first 25 terms for these 23 cases.

Let $(\vec{v}_j)$ be the lattice points on the edges of a $3$-dimensional reflexive polytope. We carried out the following check of Conjecture \ref{conj:1}: the coefficient of $\vec{z}^{\vec{k}}$ in $\psi_{j}^{\mathrm{n}}$ is an integer, and the coefficient of $\vec{z}^{\vec{k}}$ in $\log(\psi_{j}^{\mathrm{n}})$ is positive, for all $\vec{k} \in K_0(d_{\mathrm{rank}(K)})$, where 
\[(d_1,d_2,\dots,d_{22})=(30,30,30,30,30,20,10,9,8,7,6,5,4,4,3,3,3,3,3,3,3,3).\]
(This check was carried out as part of the first author's M.Math. dissertation, and was checked up to a given value of $d$ rather than up to a given number of terms.)

Although it is impractical to enumerate Fano or reflexive polytopes in higher dimensions, we generated some examples in an ad-hoc way as follows. We took the $5$-dimensional reflexive polytope $\Delta$ with vertices $6\vec{1}_i - \vec{1}$ for $i=1,\ldots,5$ (where recall $\vec{1}_i$ are the standard basis vectors and $\vec{1}$ is their sum), together with $-\vec{1}$, and chose $(\vec{v}_j)$ to be $10$ random lattice points in $\partial \Delta$. We discarded the resulting data $(\vec{v}_j)$ if it was not Fano, or if the convex hull of the $\vec{v}_j$ was reflexive, or if Theorem \ref{thm:conj1a} applied to it (Theorem \ref{thm:conj1b} never applies as the rank of $K$ for these examples is $5$). 
We generated 20 examples in this way, and checked that Conjectures \ref{conj:1} and \ref{conj:2} held for the first $50$ terms in these examples.

In order to generate some examples with $p >1$, we took $\nabla$ to be the octahedron (i.e., the convex hull of vectors $\pm \vec{1}_1,\pm \vec{1}_2,\pm \vec{1}_3$ where $\vec{1}_i$ are the standard basis vectors of $\Z^3$). Any partition of the vertices of $\nabla$ defines a nef partition; choosing a 2-part or 3-part partition at random, we took the dual $(\vec{v}_{ij})$ to this random nef partition in accordance with \cite{Batyrev_Borisov}, and let $\Delta_i$ be the lattice polytopes associated to $(\vec{v}_{ij})$. 
We then chose a random lattice point in two of the $\Delta_i$, and added them to the respective list $\vec{v}_{ij}$. Note that this does not change the convex hull of the $\Delta_i$, so the data remain Fano. The reason we took the dual nef partition was because if we hadn't, then the only lattice points we could have added would have been additional copies of the vertices of $\Delta$; this would still be a non-trivial new case of the conjecture to check, but doesn't seem as exotic. 
We generated 15 examples with $p=2$ in this way, and $20$ examples with $p=3$. 
We checked that Conjectures \ref{conj:1} and \ref{conj:2} held for the first 80 terms in these examples.

\renewbibmacro{in:}{}
\def\bibrangedash{ -- }
\printbibliography 

\end{document}